\definecolor{refcolor}{RGB}{0,0,190}
\theoremstyle{definition}
\newtheorem{theorem}{Theorem}[section]
\newtheorem{definition}[theorem]{Definition}
\newtheorem{proposition}[theorem]{Proposition}
\newtheorem{corollary}[theorem]{Corollary}
\newtheorem{remark}[theorem]{Remark}
\newtheorem{example}[theorem]{Example}
\def\({\left(}
\def\){\right)}
\newcommand{\R}{\mathbb{R}}
\newcommand{\de}{\textnormal{d}}
\newcommand{\grad}{\textnormal{grad }}
\newcommand{\tn}{\textnormal}
\newcommand{\ds}{\displaystyle}
\newcommand{\ie}{\textit{i.e.} }
\newcommand{\cf}{\textit{cf.} }
\newcommand{\eg}{\textit{e.g.} }
\newcommand{\citep}[2]{\cite{#1}, p. #2}
\newcommand{\citepcf}[2]{(\cf \citep{#1}{#2})}
\newcommand{\rank}{\textnormal{rank }}
\newcommand{\mf}[1]{\mathfrak{#1}}
\newcommand{\mc}[1]{\mathcal{#1}}
\newcommand{\ms}[1]{\mathscr{#1}}
\newcommand{\tensors}[3]{\mc T{}^{#1}_{#2}#3}
\newcommand{\sref}[1]{\S\ref{#1}}
\newcommand{\idxannih}[2]{#1{}^{#2}{}}
\newcommand{\idxcoannih}[2]{#1{}_{#2}{}}
\newcommand{\radix}[1]{\idxcoannih{#1}{\circ}}
\newcommand{\annih}[1]{\idxannih{#1}{\bullet}}
\newcommand{\coannih}[1]{\idxcoannih{#1}{\bullet}}
\newcommand{\annihg}{\coannih{g}}
\newcommand{\metric}[1]{\langle#1\rangle}
\newcommand{\annihprod}[1]{\coannih{\langle\!\langle#1\rangle\!\rangle}}
\newcommand{\supp}[1]{\tn{supp}(#1)}
\newcommand{\cocontr}{{{}_\bullet}}
\newcommand{\vectmodule}{\mf X}
\newcommand{\fivect}[1]{\vectmodule(#1)}
\newcommand{\fivectnull}[1]{\vectmodule_\circ(#1)}
\newcommand{\fiscal}[1]{\ms F(#1)}
\newcommand{\fiformk}[2]{\mc A^{#1}(#2)}
\newcommand{\fivectlift}[1]{\mf L(#1)}
\newcommand{\annihforms}[1]{\annih{\mc A}(#1)}
\newcommand{\discformsk}[2]{A_d{}^{#1}(#2)}
\newcommand{\metricformsk}[2]{\annih{\ms A}{}^{#1}(#2)}
\newcommand{\srformsk}[2]{\annih{\ms A}{}^{#1}(#2)}
\newcommand{\lie}{\mc L}
\newcommand{\kosz}{\mc K}
\newcommand{\der}{\nabla}
\newcommand{\dera}[1]{\der_{#1}}
\newcommand{\lder}{\der^{\flat}}
\newcommand{\ldera}[1]{\lder_{#1}}
\newcommand{\lderb}[2]{\ldera{#1}{#2}}
\newcommand{\lderc}[3]{(\lderb{#1}{#2})(#3)}
\newcommand{\ric}{\tn{Ric}}
\newcommand{\dsfrac}[2]{\ds{\frac{#1}{#2}}}
\newcommand{\schw}{Schwarzschild}
\newcommand{\rn}{Reissner-Nordstr\"om}
\newcommand{\kn}{Kerr-Newman}
\newcommand{\flrw}{Friedmann-Lema\^itre-Robertson-Walker}
\def\hyph{-\penalty0\hskip0pt\relax}
\newcommand{\semiriem}{semi{\hyph}Riemannian}
\newcommand{\semireg}{semi{\hyph}regular}
\newcommand{\ssemireg}{Semi{\hyph}regular}
\newcommand{\nondeg}{non{\hyph}degenerate}
\newcommand{\rstationary}{radical{\hyph}stationary}
\newcommand{\rannih}{radical{\hyph}annihilator}
\begin{document} 
 
\title{The Geometry of Warped Product Singularities}
\author{O.C. Stoica$^1$}
\thanks{$^1$Department of Theoretical Physics, National Institute of Physics and Nuclear Engineering -- \textit{Horia Hulubei}, Bucharest, Romania, e-mail: cristi.stoica@theory.nipne.ro.\\Partially supported by Romanian Government grant PN II Idei 1187.}

\begin{abstract}
In this article the degenerate warped products of singular semi-Riemannian manifolds are studied. They were used recently by the author to handle singularities occurring in General Relativity, in black holes and at the big-bang. One main result presented here is that a degenerate warped product of semi-regular semi-Riemannian manifolds with the warping function satisfying a certain condition is a semi-regular semi-Riemannian manifold. The connection and the Riemann curvature of the warped product are expressed in terms of those of the factor manifolds. Examples of singular semi-Riemannian manifolds which are semi-regular are constructed as warped products. Applications include cosmological models and black holes solutions with semi-regular singularities. Such singularities are compatible with a certain reformulation of the Einstein equation, which in addition holds at semi-regular singularities too.

\bigskip
\noindent 
\keywords{warped products; singular semi-Riemannian manifolds; degenerate manifolds; spacetime singularities; big bang singularity; black hole singularities}
\end{abstract}


\maketitle

\setcounter{tocdepth}{1}
\tableofcontents

\section{Introduction}

The warped product provides a way to construct new semi-Riemannian manifolds from known ones \cite{BON69,BEP82,ONe83}. This construction has useful applications in General Relativity, in the study of cosmological models and black holes. In such models, singularities are usually present, and at such points the warping function becomes $0$. For the {\flrw} model for example, the metric of the product manifold becomes degenerate, and the Levi-Civita connection and Riemann curvature, as usually defined, become singular or undefined. Therefore, we need to apply the tools of singular geometry \cite{Sto11a}.

This article continues the study of singular manifolds developed by the author in \cite{Sto11a,Sto11d}, extending it to warped products. We start with a brief recall of notions related to product manifolds in \sref{ss_prod_man}. Then, basic notions of singular geometry and the main ideas from \cite{Sto11a}, which will be applied here, are remembered in \sref{ss_singular_semi_riem}. In \sref{s_deg_wp_ssr} we define the degenerate warped products of singular manifolds, and study the Koszul form of the warped product in terms of the Koszul form of the factors. Then, in \sref{s_semi_reg_semi_riem_man_warped} we show that the warped products of {\rstationary} manifolds are also {\rstationary}, if the warping function satisfies a certain condition. After that, we prove a similar result for {\semireg} manifolds, which ensures the smoothness of the Riemann curvature tensor. In \sref{s_riemann_wp_deg} we express the Riemann curvature of {\semireg} warped products in terms of the factor manifolds.
We conclude in \sref{s_riemann_wp_deg_applications} by giving some examples of {\semireg} warped products, and some applications to General Relativity, including cosmological models with {\semireg} big-bang singularity, and stationary black hole solutions. {\ssemireg} singularities are compatible with a densitized version of Einstein's equation, which remains valid at the singularity too.

\section{Preliminaries}
\label{s_prelim}

\subsection{Product manifolds}
\label{ss_prod_man}

We first recall some elementary notions about the \textit{product manifold} $B\times F$ of two differentiable manifolds $B$ and $F$. See for example \cite{ONe83}, p. {24--25}.

At each point $p=(p_1,p_2)$ of the manifold $M_1\times M_2$, the tangent space decomposes as
\begin{equation}
	T_{(p_1,p_2)}(M_1\times M_2)\cong T_{(p_1,p_2)}(M_1)\oplus T_{(p_1,p_2)}(M_2),
\end{equation}
where $T_{(p_1,p_2)}(M_1):=T_{(p_1,p_2)}(M_1\times p_2)$ and $T_{(p_1,p_2)}(M_2):=T_{(p_1,p_2)}(p_1\times M_2)$.

Let $\pi_i:M_1\times M_2\to M_i$, for $i\in\{1,2\}$, be the canonical projections. The \textit{lift of the scalar field} $f_i\in\fiscal{M_i}$  is the scalar field $\tilde f_i:=f_i\circ\pi_i\in\fivect{M_1\times M_2}$.
The \textit{lift of the vector field} $X_i\in\fivect{M_i}$ is the unique vector field $\tilde X_i$ on $M_1\times M_2$ satisfying $\de \pi_i(\tilde X_i)=X_i$. We denote the set of all vector fields $X\in \fivect{M_1\times M_2}$ which are lifts of vector fields $X_i\in \fivect{M_i}$ by $\fivectlift{M,M_i}$. The \textit{lift of a covariant tensor} $T\in\tensors 0 s {M_i}$ is given by $\tilde T\in \tensors 0 s {(M_1 \times M_2)}$, $\tilde T:=\pi^*_i(T)$. The \textit{lift of a tensor} $T\in\tensors 1 s {M_i}$ is given, for any $X_1,\ldots,X_s\in \fivect{M_1 \times M_2}$, by $\tilde T\in \tensors 1 s {(M_1 \times M_2)}$, $\tilde T(X_1,\ldots,X_s) =\tilde X$, where $\tilde X \in \fivect{M_1 \times M_2}$ is the lifting of the vector field $X\in\fivect{M_i}$, $X=T(\pi_i(X_1),\ldots,\pi_i(X_s))$.

\subsection{Singular {\semiriem} manifolds}
\label{ss_singular_semi_riem}

We recall here some notions about singular semi-Riemannian manifolds, and some of the main results from \cite{Sto11a}, which will be used in the rest of the article.

\begin{definition}(also see \cite{Kup87b})
\label{def_sing_semiRiemm_man}
A \textit{singular {\semiriem} manifold} $(M,g)$ is a differentiable manifold $M$ endowed with a symmetric bilinear form $g\in \Gamma(T^*M \odot_M T^*M)$ named \textit{metric}. The manifold $(M,g)$ is said to be with \textit{constant signature} if the signature of $g$ is fixed, otherwise, $(M,g)$ is said to be with \textit{variable signature}. Particular cases are the \textit{{\semiriem} manifolds}, having the metric {\nondeg} (and automatically having constant signature), and  \textit{Riemannian manifolds}, when $g$ is positive definite.
\end{definition}

If $(V,g)$ is a finite dimensional inner product space with an inner product $g$ which may be degenerate, then we call the totally degenerate space $\radix{V}:=V^\perp$ the \textit{radical} of $V$. The inner product $g$ on $V$ is {\nondeg} if and only if $\radix{V}=\{0\}$.
The \textit{radical of $TM$}, denoted by $\radix{T}M$, is defined by $\radix{T}M=\cup_{p\in M}\radix{(T_pM)}$. We denote by $\fivectnull{M}$ the module of vector fields on $M$ for which $W_p\in\radix{(T_pM)}$. 

The remaining of this section recalls very briefly the main notions and results on singular manifolds, as presented in \cite{Sto11a}.

We define
\begin{equation}
	\annih{T}M=\bigcup_{p\in M}\annih{(T_pM)},
\end{equation}
where $\annih{(T_pM)} \subseteq T^*_pM$ is the space of covectors at $p$ of the form $\omega_p(X_p)=\metric{Y_p,X_p}$, for some vectors $Y_p\in T_p M$ and any $X_p\in T_p M$. We define sections of $\annih{T}M$ by
\begin{equation}
	\annihforms{M}:=\{\omega\in\fiformk 1{M}|\omega_p\in\annih{(T_pM)}\tn{ for any }p\in M\}.
\end{equation}
On $\annih{T}M$ there is a unique {\nondeg} inner product $\annihg$, defined by $\annihprod{\omega,\tau}:=\annihg(\omega,\tau):=\metric{X,Y}$, where $\annih X=\omega$, $\annih Y=\tau$, $X,Y\in\fivect M$.

A tensor $T$ of type $(r,s)$ is named \textit{{\rannih}} in the $l$-th covariant slot if  $T\in \tensors r{l-1}{M}\otimes_M\annih{T}M\otimes_M \tensors 0{s-l}{M}$.

We now show how to define uniquely the \textit{covariant contraction} or \textit{covariant trace}. We define it first on tensors $T\in\annih{T}M\otimes_M\annih{T}M$, by $C_{12}T=\annihg^{ab}T_{ab}$. This definition does not depend on the basis, because $\annihg\in\annih{T}^*M\otimes_M\annih{T}^*M$. This operation can be extended by linearity to any tensors which are radical in two covariant indices. For a tensor field $T$ we define the contraction $C_{kl} T$ by
\begin{equation*}
T(\omega_1,\ldots,\omega_r,v_1,\ldots,\cocontr,\ldots,\cocontr,\ldots,v_s).
\end{equation*}

If the metric is non-degenerate, we can define the covariant derivative of a vector field $Y$ in the direction of a vector field $X$, where $X,Y\in\fivect{M}$, by the \textit{Koszul formula} (see \eg \citep{ONe83}{61}). If the metric is degenerate, we cannot extract the covariant derivative from the Koszul formula. We define the \textit{Koszul form} as a shorthand for the long right part of the Koszul formula and were emphasized some of its properties.

Let's recall the definition of the Koszul form and its properties, without proof, from \cite{Sto11a}.

\begin{definition}[The Koszul form]
\label{def_Koszul_form}
\textit{The Koszul form} is defined as
\begin{equation*}
	\kosz:\fivect M^3\to\R,
\end{equation*}
\begin{equation}
\label{eq_Koszul_form}
\begin{array}{llll}
	\kosz(X,Y,Z) &:=&\ds{\frac 1 2} \{ X \metric{Y,Z} + Y \metric{Z,X} - Z \metric{X,Y} \\
	&&\ - \metric{X,[Y,Z]} + \metric{Y, [Z,X]} + \metric{Z, [X,Y]}\}.
\end{array}
\end{equation}
\end{definition}

\begin{theorem}
\label{thm_Koszul_form_props}
Properties of the Koszul form of a singular {\semiriem} manifold $(M,g)$:
\begin{enumerate}
	\item \label{thm_Koszul_form_props_linear}
	Additivity and $\R$-linearity in each of its arguments.
	\item \label{thm_Koszul_form_props_flinearX}
	$\fiscal M$-linearity in the first argument:

	$\kosz(fX,Y,Z) = f\kosz(X,Y,Z).$
	\item \label{thm_Koszul_form_props_flinearY}
	The \textit{Leibniz rule}:

	$\kosz(X,fY,Z) = f\kosz(X,Y,Z) + X(f) \metric{Y,Z}.$
	\item \label{thm_Koszul_form_props_flinearZ}
	$\fiscal M$-linearity in the third argument:

	$\kosz(X,Y,fZ) = f\kosz(X,Y,Z).$
	\item \label{thm_Koszul_form_props_commutYZ}
	It is \textit{metric}:

	$\kosz(X,Y,Z) + \kosz(X,Z,Y) = X \metric{Y,Z}$.
	\item \label{thm_Koszul_form_props_commutXY}
	It is \textit{symmetric}:

	$\kosz(X,Y,Z) - \kosz(Y,X,Z) = \metric{[X,Y],Z}$.
	\item \label{thm_Koszul_form_props_commutZX}
	Relation with the Lie derivative of $g$:

	$\kosz(X,Y,Z) + \kosz(Z,Y,X) = (\lie_Y g)(Z,X)$.
	\item \label{thm_Koszul_form_props_commutX2Y}

	$\kosz(X,Y,Z) + \kosz(Y,Z,X) = Y\metric{Z,X} + \metric{[X,Y],Z}$.
	\end{enumerate}
for any $X,Y,Z\in\fivect M$ and $f\in\fiscal M$.
\qed\end{theorem}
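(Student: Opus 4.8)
The plan is to verify all eight properties by direct substitution into the defining formula \eqref{eq_Koszul_form}, since the Koszul form is assembled entirely from three operations whose algebraic behaviour is already known: the action of a vector field on a scalar (a derivation), the symmetric bilinear form $g$, and the Lie bracket. The only facts I will use are the symmetry $\metric{X,Y}=\metric{Y,X}$, the $\R$-bilinearity of $g$, the derivation rule $X(fh)=X(f)\,h+f\,X(h)$ for scalars, the antisymmetry $[X,Y]=-[Y,X]$, and the two Leibniz rules $[fX,Y]=f[X,Y]-Y(f)X$ and $[X,fY]=f[X,Y]+X(f)Y$. I emphasize that nondegeneracy of $g$ is never invoked, so every computation is valid verbatim for a singular metric.

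Part \ref{thm_Koszul_form_props_linear} is immediate, as each of the six summands in \eqref{eq_Koszul_form} is additive and $\R$-linear in every slot. For the homogeneity statements in parts \ref{thm_Koszul_form_props_flinearX}, \ref{thm_Koszul_form_props_flinearY}, \ref{thm_Koszul_form_props_flinearZ}, I would substitute $fX$, $fY$, $fZ$ respectively and expand. In each case the output splits into a part proportional to $f$, which reassembles into $f\,\kosz(X,Y,Z)$, plus ``anomalous'' terms each carrying a derivative of $f$; these arise from the derivation rule applied to summands such as $Y\metric{Z,fX}$ and from the Leibniz rules applied to the bracket summands. For slots one and three the anomalous terms pair off and cancel once the symmetry of $g$ is used---for instance, in slot one the $Y(f)\metric{Z,X}$ produced by $Y\metric{Z,fX}$ cancels the $-Y(f)\metric{Z,X}$ produced by $\metric{Z,[fX,Y]}$, while the two $Z(f)$ terms cancel after rewriting $\metric{Y,X}=\metric{X,Y}$---yielding $\fiscal M$-linearity. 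For slot two the anomalous terms instead reinforce, and after the overall factor $\tfrac12$ one is left with exactly $X(f)\metric{Y,Z}$, which is the asserted Leibniz rule.

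The four symmetry identities, parts \ref{thm_Koszul_form_props_commutYZ} through \ref{thm_Koszul_form_props_commutX2Y}, follow by adding or subtracting a suitably permuted copy of \eqref{eq_Koszul_form} and then handling the three ``derivative'' summands separately from the three ``bracket'' summands. Within each group the terms either cancel in pairs or double up: for part \ref{thm_Koszul_form_props_commutYZ} every bracket term vanishes by antisymmetry and the derivative terms collapse to $X\metric{Y,Z}$; for part \ref{thm_Koszul_form_props_commutXY} the derivative terms cancel and the bracket terms collapse to $2\metric{Z,[X,Y]}$; and parts \ref{thm_Koszul_form_props_commutZX} and \ref{thm_Koszul_form_props_commutX2Y} proceed the same way. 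For part \ref{thm_Koszul_form_props_commutZX} I would first record the coordinate-free identity $(\lie_Y g)(Z,X)=Y\metric{Z,X}-\metric{[Y,Z],X}-\metric{Z,[Y,X]}$ and then check that $\kosz(X,Y,Z)+\kosz(Z,Y,X)$ reduces to precisely this, using antisymmetry of the bracket to convert $\metric{Z,[X,Y]}$ into $-\metric{Z,[Y,X]}$.

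There is no genuine obstacle here; the content is bookkeeping. The one place that demands care is the exact cancellation of the $f$-derivative terms in parts \ref{thm_Koszul_form_props_flinearX} and \ref{thm_Koszul_form_props_flinearZ}: the cancellation is not term-by-term but hinges on applying $\metric{Y,X}=\metric{X,Y}$ at the right moment, and a single sign error in either bracket Leibniz rule would destroy it. The only other mild subtlety is aligning the sign convention of $\lie_Y g$ used in part \ref{thm_Koszul_form_props_commutZX} with whichever convention the paper has fixed.
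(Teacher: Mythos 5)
Your proposal is correct: expanding the defining formula \eqref{eq_Koszul_form} using only the bilinearity and symmetry of $g$, the derivation property of vector fields acting on scalars, and the two Leibniz rules for the Lie bracket---never invoking nondegeneracy---is exactly the right verification, and the delicate points you flag (the cancellation of the $Y(f)$ and $Z(f)$ terms in slots one and three after using $\metric{Y,X}=\metric{X,Y}$, the reinforcement producing $X(f)\metric{Y,Z}$ in slot two, and the sign convention in $(\lie_Y g)(Z,X)=Y\metric{Z,X}-\metric{[Y,Z],X}-\metric{Z,[Y,X]}$) are all handled accurately. Note, however, that this paper contains no proof to compare against: Theorem \ref{thm_Koszul_form_props} is recalled explicitly without proof from \cite{Sto11a}, so your direct computation is precisely the standard argument that the citation points to rather than an alternative to anything done here.
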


\begin{definition}
\label{def_l_cov_der}
Let $X,Y\in\fivect M$. The \textit{lower covariant derivative} of $Y$ in the direction of $X$ is defined as the differential $1$-form $\lderb XY \in \fiformk 1{M}$
\begin{equation}
\label{eq_l_cov_der_vect}
\lderc XYZ := \kosz(X,Y,Z),
\end{equation}
for any $Z\in\fivect{M}$.
We also define the \textit{lower covariant derivative operator}
\begin{equation}
	\lder:\fivect{M} \times \fivect{M} \to \fiformk 1{M},
\end{equation}
which associates to each $X,Y\in\fivect{M}$ the differential $1$-form $\ldera XY$.
\end{definition}

\begin{definition}
\label{def_radical_stationary_manifold}
A singular manifold $(M,g)$ is \textit{{\rstationary}} if it satisfies the condition 
\begin{equation}
\label{eq_radical_stationary_manifold}
		\kosz(X,Y,\_)\in\annihforms M,
\end{equation}
for any $X,Y\in\fivect{M}$. This definition is more general than Definition 3.1.3 from \cite{Kup96}, because it is not limited to constant signature metrics.
\end{definition}

\begin{definition}
\label{def_cov_der_covect}
Let $X\in\fivect{M}$, $\omega\in\annihforms{M}$, where $(M,g)$ is {\rstationary}. The covariant derivative of $\omega$ in the direction of $X$ is defined as
\begin{equation*}
	\der:\fivect{M} \times \annihforms{M} \to \discformsk 1 M,
\end{equation*}
\begin{equation}
	\left(\der_X\omega\right)(Y) := X\left(\omega(Y)\right) - \annihprod{\lderb X Y,\omega},
\end{equation}
where $\discformsk 1 M$ denotes the set of $1$-forms which are smooth on the regions of constant signature.
\end{definition}

\begin{definition}
\label{def_cov_der_smooth}
If the singular {\semiriem} manifold $(M,g)$ is {\rstationary}, we define:
\begin{equation}
	\srformsk 1 M = \{\omega\in\annihforms M|(\forall X\in\fivect M)\ \der_X\omega\in\annihforms M\},
\end{equation}
\begin{equation}
	\srformsk k M := \bigwedge^k_M\srformsk 1 M.
\end{equation}
\end{definition}

\begin{definition}
\label{def_riemann_curvature}
The \textit{Riemann curvature tensor} is defined as
\begin{equation*}
	R: \fivect M\times \fivect M\times \fivect M\times \fivect M \to \R,
\end{equation*}
\begin{equation}
\label{eq_riemann_curvature}
	R(X,Y,Z,T) := (\dera X {\ldera Y}Z - \dera Y {\ldera X}Z - \ldera {[X,Y]}Z)(T)
\end{equation}
for any vector fields $X,Y,Z,T\in\fivect{M}$.
\end{definition}

\begin{definition}
\label{def_semi_regular_semi_riemannian}
A singular {\semiriem} manifold $(M,g)$ satisfying
\begin{equation}
	\ldera X Y \in\srformsk 1 M
\end{equation}
for any vector fields $X,Y\in\fivect{M}$ is called \textit{{\semireg} {\semiriem} manifold}.
\end{definition}

\begin{proposition}
\label{thm_sr_cocontr_kosz}
A {\rstationary} {\semiriem} manifold $(M,g)$ is {\semireg} if and only if for any $X,Y,Z,T\in\fivect M$
\begin{equation}
	\kosz(X,Y,\cocontr)\kosz(Z,T,\cocontr) \in \fiscal M.
\end{equation}
\qed
\end{proposition}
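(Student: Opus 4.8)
The plan is to collapse the whole equivalence onto a single smoothness statement, by inserting $\omega=\lderb XY$ into the definition of the covariant derivative of a one-form and showing that the contraction appearing in the statement is the only ingredient that can fail to be smooth. First I would unwind the definitions. By Definition \ref{def_semi_regular_semi_riemannian} together with Definition \ref{def_cov_der_smooth}, and because $\lderb XY=\kosz(X,Y,\_)$ lies in $\annihforms M$ automatically for a {\rstationary} manifold (Definitions \ref{def_l_cov_der} and \ref{def_radical_stationary_manifold}), the manifold $(M,g)$ is {\semireg} precisely when $\dera Z\lderb XY\in\annihforms M$ for all $X,Y,Z\in\fivect M$. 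Substituting $\omega=\lderb XY$ into Definition \ref{def_cov_der_covect} and using $\lderc XYW=\kosz(X,Y,W)$ produces the central identity
\begin{equation*}
\left(\dera Z\lderb XY\right)(W)=Z\left(\kosz(X,Y,W)\right)-\annihprod{\lderb ZW,\lderb XY}
\end{equation*}
for all $W\in\fivect M$; its last term is precisely the contraction $\kosz(Z,W,\cocontr)\kosz(X,Y,\cocontr)$, read through the {\nondeg} inner product $\annihg$ of Definition \ref{def_co_inner_product}.

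Next I would record the two structural facts that make this identity decisive. The first is that the Koszul form is smooth on all of $M$: the formula of Definition \ref{def_Koszul_form} uses only $g$, directional derivatives of scalars, and Lie brackets, with no inverse metric, so $\kosz(X,Y,W)\in\fiscal M$ and hence $Z\left(\kosz(X,Y,W)\right)\in\fiscal M$ even across the degeneracy locus. The second is that $\dera Z\lderb XY$ annihilates the radical for free: for $W\in\fivectnull M$ the metric property \ref{thm_Koszul_form_props_commutYZ} of Theorem \ref{thm_Koszul_form_props} gives $\kosz(Z,W,U)=-\kosz(Z,U,W)$ because $\metric{W,U}\equiv0$, and then $-\kosz(Z,U,W)=-\lderc ZUW=0$ since $\lderb ZU\in\annihforms M$; thus $\lderb ZW=0$, and likewise $\kosz(X,Y,W)=0$, so the identity yields $\left(\dera Z\lderb XY\right)(W)=0$. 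Since $\annih{(T_pM)}$ is the annihilator of $\radix{(T_pM)}$, a radical-annihilating one-form lies in $\annihforms M$ as soon as it is smooth. Consequently $\dera Z\lderb XY\in\annihforms M$ if and only if it is a smooth one-form, which by the identity — the term $Z\left(\kosz(X,Y,W)\right)$ being always smooth — holds if and only if $\annihprod{\lderb ZW,\lderb XY}\in\fiscal M$ for every $W$. Letting $X,Y,Z,W$ range freely and relabelling then yields both directions of the equivalence simultaneously.

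The step I expect to require the most care is the reduction of membership in $\annihforms M$ to pure smoothness, i.e.\ the claim that the pointwise obstruction is vacuous. This rests on $\lderb ZW=0$ for radical $W$ and on identifying $\annih{(T_pM)}$ with the annihilator of $\radix{(T_pM)}$; both must be verified pointwise at every $p$, including on the set where the signature of $g$ jumps and $\annih TM$ fails to be a smooth subbundle. It is exactly on this set that $\kosz(X,Y,\cocontr)\kosz(Z,T,\cocontr)$ can lose smoothness, so the argument must confirm that the function in the identity is genuinely defined there: the inner product $\annihg$ is globally defined by Definition \ref{def_co_inner_product}, whence the left-hand side of the identity is a well-defined function on all of $M$ and its global smoothness is the sole issue. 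A secondary, notational, point is to fix the $\cocontr$-contraction of the statement as the $\annihg$-pairing $\annihprod{\lderb XY,\lderb ZT}$, which is meaningful precisely because both one-forms lie in $\annihforms M$ by the {\rstationary} condition.
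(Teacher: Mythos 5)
Your skeleton is the natural one, and it is essentially forced by the definitions (note the paper itself states Proposition \ref{thm_sr_cocontr_kosz} without proof, deferring to \cite{Sto11a}): substitute $\omega=\lderb XY$ into Definition \ref{def_cov_der_covect}, observe that $Z\left(\kosz(X,Y,W)\right)$ is unconditionally smooth, and trade membership of $\dera Z\lderb XY$ in $\annihforms M$ for smoothness of the contraction. Your forward implication is complete, and your identification of $\annih{(T_pM)}$ with the annihilator of $\radix{(T_pM)}$ is correct. The genuine gap is in the backward implication, at the step you describe as ``annihilates the radical for free''. What your lemma proves is that $(\dera Z\lderb XY)(W)=0$ for radical vector \emph{fields} $W\in\fivectnull M$; what the reduction to smoothness needs is the pointwise statement that $(\dera Z\lderb XY)_p$ kills every $W_p\in\radix{(T_pM)}$, at every $p$. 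These are not interchangeable: $\lderb ZW$ is not tensorial in $W$ (it obeys a Leibniz rule), so the pointwise value cannot be read off your identity slot by slot, and at a point where the rank of $g$ drops, a radical tangent vector need not extend to a radical field. Indeed, if $p$ is an \emph{isolated} degenerate point --- the situation this paper actually cares about, e.g.\ an isolated zero of the warping function --- then every $W\in\fivectnull M$ vanishes identically near $p$, so your lemma carries no information exactly where it is needed.

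Moreover the pointwise claim is genuinely not ``free'': it fails for general {\rstationary} manifolds. On $\left(\R,\ g=t^2\,\de t\otimes\de t\right)$, which is {\rstationary}, one has $\lderb{\partial_t}{\partial_t}=t\,\de t$ and
\begin{equation*}
	\left(\dera{\partial_t}\lderb{\partial_t}{\partial_t}\right)(\partial_t)
	= \partial_t(t)-\annihprod{t\,\de t,\, t\,\de t}
	= \begin{cases} 0, & t\neq 0,\\ 1, & t=0,\end{cases}
\end{equation*}
while $\partial_t$ spans $\radix{(T_0\R)}$; so the covariant derivative does not annihilate the radical at the degenerate point. (Here $\kosz(\partial_t,\partial_t,\cocontr)\kosz(\partial_t,\partial_t,\cocontr)$ is discontinuous at $0$, which is why the Proposition itself is not contradicted.) Consequently, in the backward direction the vanishing of $(\dera Z\lderb XY)_p$ on $\radix{(T_pM)}$ at rank-jumping points must be \emph{derived from} the assumed smoothness of the contractions, and this derivation is missing from your proposal. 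It is a real argument, not a formality: for instance, for $g=\alpha(t)\,\de t\otimes\de t$ with $\alpha\geq 0$, the needed vanishing at a zero $t_0$ of $\alpha$ is $\alpha''(t_0)=0$, and it follows only because continuity of the contraction $\tfrac14\alpha'^2/\alpha$, whose value at $t_0$ is computed by $\annihg$ and hence equals $0$, forces it. Your closing remark that $\annihg$ is globally well defined addresses only the well-definedness of the contraction, not this vanishing, so the backward implication remains unproven at precisely the degenerate points the Proposition is designed to handle.
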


\begin{example}
\label{s_semi_reg_semi_riem_man_example_diagonal}
We construct a useful example of {\semireg} metric \cite{Sto11a}. Let's consider that there is a coordinate chart in which the metric is diagonal. The components of the Koszul form are in this case the Christoffel's symbols of the first kind, which are of the form $\pm\frac 1 2\partial_a g_{bb}$, because the metric is diagonal. Assume that $g=\sum_a\varepsilon_a\alpha_a^2\de x^a\otimes \de x^a$, $\varepsilon_a\in\{-1,1\}$. Then the metric is {\semireg} if there is a smooth function $f_{abc}\in\fiscal{M}$ with $\supp{f_{abc}}\subseteq\supp{\alpha_c}$ for any $a,b\in\{1,\ldots,n\}$ and $c\in\{a,b\}$, and
\begin{equation}
\label{eq_diagonal_metric:semireg}
\partial_a\alpha_b^2=f_{abc}\alpha_c.
\end{equation}
If $c=b$, $\partial_a\alpha_b^2=2\alpha_b\partial_a\alpha_b$ implies that the function is  $f_{abb}=2\partial_a\alpha_b$. In addition, this has to satisfy the condition $\partial_a\alpha_b=0$ whenever $\alpha_b=0$. We require the condition $\supp{f_{abc}}\subseteq\supp{\alpha_c}$ because for being {\semireg}, a manifold has to be {\rstationary}.
\end{example}

\begin{theorem}
\label{thm_riemann_curvature_semi_regular}
The Riemann curvature of a {\semireg} {\semiriem} manifold $(M,g)$ is a smooth tensor field $R\in\tensors 0 4 M$.
\qed
\end{theorem}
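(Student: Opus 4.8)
The plan is to peel Definition \ref{def_riemann_curvature} apart one layer at a time, reducing $R$ to an explicit expression in Koszul forms from which both smoothness and tensoriality can be read off. Since a {\semireg} manifold is in particular {\rstationary}, we have $\ldera Y Z=\kosz(Y,Z,\_)\in\annihforms M$, so the covariant derivative $\dera X \ldera Y Z$ of Definition \ref{def_cov_der_covect} is defined. Expanding that definition and using $\annihprod{\ldera X T,\ldera Y Z}=\kosz(X,T,\cocontr)\kosz(Y,Z,\cocontr)$, which comes from Definition \ref{def_co_inner_product} together with the identity $\annihprod{\annih W,\omega}=\omega(W)$ valid for $\omega\in\annihforms M$, I get
\begin{equation*}
(\dera X \ldera Y Z)(T)=X\,\kosz(Y,Z,T)-\kosz(X,T,\cocontr)\kosz(Y,Z,\cocontr).
\end{equation*}
Subtracting the same expression with $X$ and $Y$ interchanged and the bracket term $(\ldera{[X,Y]}Z)(T)=\kosz([X,Y],Z,T)$ produces a closed formula for $R(X,Y,Z,T)$ in Koszul forms and their coradical contractions.

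Smoothness is the substantive point and is exactly where {\semireg}ularity enters. The summands $X\,\kosz(Y,Z,T)$ and $\kosz([X,Y],Z,T)$ are manifestly smooth, being a derivative and a value of the Koszul form of smooth fields. The only dangerous terms are the contractions $\kosz(X,T,\cocontr)\kosz(Y,Z,\cocontr)$, which pair two elements of $\annihforms M$ through the co-metric $\annihg$ and could in principle blow up where $g$ degenerates. Proposition \ref{thm_sr_cocontr_kosz} states precisely that on a {\semireg} manifold such contractions lie in $\fiscal M$; equivalently, {\semireg}ularity means $\ldera Y Z\in\srformsk 1 M$, so by Definition \ref{def_cov_der_smooth} the covector field $\dera X \ldera Y Z$ lies in $\annihforms M$ and is therefore a genuine smooth $1$-form, whose value on $T$ is a smooth function. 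Either reading gives $R(X,Y,Z,T)\in\fiscal M$.

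It remains to verify that $R$ is $\fiscal M$-multilinear, hence a bona fide tensor field. Linearity in the fourth argument is automatic: $\dera X \ldera Y Z$ and $\ldera{[X,Y]}Z$ are $1$-forms and $R(X,Y,Z,T)$ is merely their evaluation on $T$. For the first three arguments I would substitute $fX$, $fY$, $fZ$ into the closed formula and simplify with Theorem \ref{thm_Koszul_form_props}: $\R$-linearity and $\fiscal M$-linearity in the first and third slots (items \ref{thm_Koszul_form_props_linear}, \ref{thm_Koszul_form_props_flinearX}, \ref{thm_Koszul_form_props_flinearZ}), the Leibniz rule in the second slot (item \ref{thm_Koszul_form_props_flinearY}), the metric property (item \ref{thm_Koszul_form_props_commutYZ}), the identity $[fX,Y]=f[X,Y]-(Yf)X$, and again $\annihprod{\annih W,\omega}=\omega(W)$. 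As in the classical computation the coefficient-derivative terms cancel in pairs: in the first two slots the Lie-bracket correction $-(Yf)X$ matches the remainder left over from the mixed second derivative, while in the third slot the Leibniz remainders collapse via the metric property and the bracket identity.

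The main obstacle is organizational rather than conceptual — keeping track of these cancellations so that every term carrying a derivative of $f$ is paired off. The one genuinely analytic ingredient is the smoothness of the contracted Koszul forms, which is the whole content of {\semireg}ularity and is delivered by Proposition \ref{thm_sr_cocontr_kosz}; without it $R$ would only be smooth on the locus where $g$ has constant signature.
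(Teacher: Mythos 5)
Your proposal is correct. A point worth making explicit: this paper contains no proof of Theorem \ref{thm_riemann_curvature_semi_regular} at all --- it is recalled, marked as proved elsewhere, from \cite{Sto11a}, as is Proposition \ref{thm_riemann_curvature_tensor_koszul_formula} --- so there is nothing in-text to compare against; your argument is a reconstruction of the missing proof, and it is the natural one given the paper's toolkit. Indeed, the closed expression you obtain by expanding Definition \ref{def_cov_der_covect} (using $\annihprod{\annih W,\omega}=\omega(W)$ for $\omega\in\annihforms M$) is precisely equation \eqref{eq_riemann_curvature_tensor_koszul_formula}, and you correctly isolate the one analytically dangerous ingredient: the contractions $\kosz(X,Z,\cocontr)\kosz(Y,T,\cocontr)$, whose smoothness is exactly the content of Proposition \ref{thm_sr_cocontr_kosz} and is where {\semireg}ularity (rather than mere {\rstationary} behaviour) enters; the remaining terms $X\kosz(Y,Z,T)$, $Y\kosz(X,Z,T)$, $\kosz([X,Y],Z,T)$ are smooth because $\kosz$ is built from derivatives of $g$ and Lie brackets of smooth fields. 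Your tensoriality sketch also checks out: in the first slot the $Y(f)\kosz(X,Z,T)$ terms coming from the Leibniz expansion of $-Y\kosz(fX,Z,T)$ and from $[fX,Y]=f[X,Y]-Y(f)X$ cancel, antisymmetry of the formula in $X,Y$ settles the second slot, and in the third slot the remainders collapse using $\kosz(X,Z,T)+\kosz(X,T,Z)=X\metric{Z,T}$, the identity $X(Y(f))-Y(X(f))=[X,Y](f)$, and once more $\annihprod{\annih Z,\omega}=\omega(Z)$. The only cosmetic caveat is that you derive Proposition \ref{thm_riemann_curvature_tensor_koszul_formula} en route rather than citing it; since it too is stated without proof here, that is a feature rather than a defect.
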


\begin{proposition}
\label{thm_riemann_curvature_tensor_koszul_formula}
The Riemann curvature of a {\semireg} {\semiriem} manifold $(M,g)$ satisfies 
\begin{equation}
\begin{array}{lll}
	R(X,Y,Z,T) &=& X\left(\lderc Y Z T\right) - Y\left(\lderc X Z T\right) - \lderc {[X,Y]}ZT \\
&& + \annihprod{\lderb XZ,\lderb Y T} - \annihprod{\lderb YZ,\lderb X T}, \\
\end{array}
\end{equation}
and
\begin{equation}
\label{eq_riemann_curvature_tensor_koszul_formula}
\begin{array}{lll}
	R(X,Y,Z,T)&=& X \kosz(Y,Z,T) - Y \kosz(X,Z,T) - \kosz([X,Y],Z,T)\\
	&& + \kosz(X,Z,\cocontr)\kosz(Y,T,\cocontr) - \kosz(Y,Z,\cocontr)\kosz(X,T,\cocontr),
\end{array}
\end{equation}
for any vector fields $X,Y,Z,T\in\fivect{M}$.
\qed
\end{proposition}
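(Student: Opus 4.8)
The plan is to compute $R(X,Y,Z,T)$ directly from Definition \ref{def_riemann_curvature} by expanding each of its three terms with the definition of the covariant derivative of a $1$-form (Definition \ref{def_cov_der_covect}), and then to rewrite the lower covariant derivatives in terms of the Koszul form via Definition \ref{def_l_cov_der}. The two displayed equations differ only by this last rewriting, so establishing the first yields the second immediately.

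First I would note that every {\semireg} manifold is {\rstationary}, so that for all $Y,Z\in\fivect M$ the lower covariant derivative $\ldera Y Z$ lies in $\annihforms M$ by Definition \ref{def_radical_stationary_manifold}; this is precisely what permits the outer covariant derivative in Definition \ref{def_riemann_curvature} to be applied. Evaluating on $T$, the third term of Definition \ref{def_riemann_curvature} is just $\lderc{[X,Y]}ZT$, while applying Definition \ref{def_cov_der_covect} to the first two terms (with $\omega=\ldera Y Z$ and with $\omega=\ldera X Z$) gives
\begin{equation*}
	\left(\dera X{\ldera Y Z}\right)(T) = X\left(\lderc Y Z T\right) - \annihprod{\lderb X T,\ldera Y Z}
\end{equation*}
and the analogous expression with $X$ and $Y$ interchanged. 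Summing the three contributions with the signs prescribed by Definition \ref{def_riemann_curvature}, and using the symmetry of the {\nondeg} inner product $\annihprod{\cdot,\cdot}$ (Definition \ref{def_co_inner_product}) to rewrite $\annihprod{\lderb X T,\ldera Y Z}=\annihprod{\lderb Y Z,\lderb X T}$, yields the first displayed identity.

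Next I would substitute $\lderc X Y Z=\kosz(X,Y,Z)$ from Definition \ref{def_l_cov_der} into every lower-derivative term and identify the co-inner products with co-contractions of Koszul forms. Concretely, since $\ldera X Z=\kosz(X,Z,\cdot)$ as a $1$-form, Definitions \ref{def_co_inner_product} and \ref{def_contraction_covariant} give
\begin{equation*}
	\annihprod{\lderb X Z,\lderb Y T} = \annihg^{ab}(\lderb X Z)_a(\lderb Y T)_b = \kosz(X,Z,\cocontr)\kosz(Y,T,\cocontr),
\end{equation*}
and likewise for the remaining term. Inserting these identifications into the first identity produces equation \eqref{eq_riemann_curvature_tensor_koszul_formula}.

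The computation is essentially mechanical, so I expect no deep obstacle; the two points requiring attention are bookkeeping in nature. The first is tracking signs and the order of the arguments of $\annihprod{\cdot,\cdot}$ so that the final quadratic terms emerge antisymmetric in the pair $(X,Y)$, as the statement requires. The second, and the only genuinely conceptual point, is that the co-contraction $\kosz(X,Z,\cocontr)\kosz(Y,T,\cocontr)$ be a bona fide scalar field rather than a merely formal expression: this is exactly what the {\semireg}ity hypothesis provides, through Proposition \ref{thm_sr_cocontr_kosz}, and it is what makes equation \eqref{eq_riemann_curvature_tensor_koszul_formula} well defined in the first place.
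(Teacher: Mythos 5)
Your proof is correct, and there is nothing to compare it against in detail: the paper states this proposition with a \qed, recalling it from \cite{Sto11a} without proof, and your argument is the natural one---expand Definition \ref{def_riemann_curvature} term by term using Definition \ref{def_cov_der_covect} with $\omega=\ldera YZ$ (legitimate because semi-regularity implies $\ldera YZ\in\annihforms M$), use the symmetry of $\annihprod{\cdot,\cdot}$, and then identify $\annihprod{\lderb XZ,\lderb YT}$ with the co-contraction $\kosz(X,Z,\cocontr)\kosz(Y,T,\cocontr)$ via Definitions \ref{def_co_inner_product} and \ref{def_contraction_covariant}. You also correctly isolate the two points needing justification, namely that the outer covariant derivative is defined at all and that Proposition \ref{thm_sr_cocontr_kosz} makes the contracted terms genuine smooth scalar fields.
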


\section{Degenerate warped products of singular {\semiriem} manifolds}
\label{s_deg_wp_ssr}

The warped product is defined in general between two ({\nondeg}) {\semiriem} manifolds, (\cf \cite{BON69}, \cite{BEP82}, \cite{kruchkovich1961spacesKagan}, \cite{kruchkovich1961classRiemannSpaces}, \cite{ONe83}, p. {204--211}, also see the survey in \cite{pranovic1995warped} and references therein. It is straightforward to extend the definition to singular {\semiriem} manifolds, as it is done in this section.
Further, we will study some properties of the warped products, in situations when the warping function $f$ is allowed to vanish or to become negative, and when $(B,g_B)$ and $(F,g_F)$ are allowed to be singular and with variable signature.

\begin{definition}[generalizing \cite{ONe83}, p. {204}]
\label{def_wp}
Let $(B,g_B)$ and $(F,g_F)$ be two singular {\semiriem} manifolds, and $f\in\fiscal{B}$ a smooth function. The \textit{warped product} of $B$ and $F$ with \textit{warping function} $f$ is the {\semiriem} manifold
\begin{equation}
	B\times_f F:=\big(B\times F, \pi^*_B(g_B) + (f\circ \pi_B)\pi^*_F(g_F)\big),
\end{equation}
where $\pi_B:B\times F \to B$ and $\pi_F: B \times F \to F$ are the canonical projections. It is customary to call $B$ the \textit{base} and $F$ the \textit{fiber} of the warped product $B\times_f F$.

We will use for all vector fields $X_B,Y_B\in\fivect{B}$ and $X_F,Y_F\in\fivect{F}$ the notation $\metric{X_B,Y_B}_B := g_B(X_B,Y_B)$ and $\metric{X_F,Y_F}_F := g_F(X_F,Y_F)$.
The inner product on $B\times_f F$ takes, for any point $p\in B\times F$ and for any pair of tangent vectors $x,y\in T_p(B\times F)$, the explicit form
\begin{equation}
\label{eq_wp_metric}
	\metric{x,y}=\metric{\de \pi_B(x),\de \pi_B(y)}_B + f^2(p)\metric{\de \pi_F(x),\de \pi_F(y)}_F.
\end{equation}
\end{definition}

\begin{remark}
\label{rem_wp}
The metric of the degenerate warped product from Definition \ref{def_wp} has the form
\begin{equation}
\de s_{B\times F}^2 = \de s_B^2 + f^2\de s_F^2.
\end{equation}
\end{remark}

\begin{remark}
Definition \ref{def_wp} is a generalization of the warped product definition, which is usually given for the case when both $g_B$ and $g_F$ are {\nondeg} and $f>0$ (see \cite{BON69}, \cite{BEP82} and \cite{ONe83}). In our definition these restrictions are dropped.
\end{remark}

\begin{remark}[similar to \cite{ONe83}, p. {204--205}]
For any $p_B\in B$, $\pi_B^{-1}(p_B)=p_B\times F$ is named the \textit{fiber} through $p_B$ and it is a {\semiriem} manifold. $\pi_F|_{p_B\times F}$ is a (possibly degenerate) homothety onto $F$. For each $p_F\in F$, $\pi_F^{-1}(p_F)=B\times p_F$ is a {\semiriem} manifold named the \textit{leave} through $p_F$. $\pi_B|_{B\times p_F}$ is an isometry onto $B$. For each $(p_B,p_F)\in B\times F$, $B \times p_F$ and $p_B \times F$ are orthogonal at $(p_B,p_F)$. For simplicity, if a vector field is a lift, we will use sometimes the same notation if they can be distinguished from the context. For example, we will be using $\metric{V,W}_F:=\metric{\pi_F(V),\pi_F(W)}_F$ for $V,W\in\fivectlift{B \times F,F}$.
\end{remark}

The following proposition recalls some simple facts which will be used frequently in the following.

\begin{proposition}
\label{thm_wp_deg_fundam}
Let $B \times_f F$ be a degenerate warped product, and let be the vector fields $X,Y,Z\in\fivectlift{B \times F,B}$ and $U,V,W\in\fivectlift{B \times F,F}$. Then
\begin{enumerate}
	\item \label{thm_wp_deg_fundam:mixed_metric}
	$\metric{X,V}=0$.
	\item \label{thm_wp_deg_fundam:mixed_lie_bracket}
	$[X,V]=0$.
	\item \label{thm_wp_deg_fundam:metric_B_constant_F}
	$V\metric{X,Y}=0$.
	\item \label{thm_wp_deg_fundam:metric_F_var_B}
	$X\metric{V,W}=2f\metric{V,W}_F X(f)$.
\end{enumerate}
\end{proposition}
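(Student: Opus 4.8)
The plan is to derive all four statements directly from the explicit metric formula \eqref{eq_wp_metric}, together with two elementary features of lifts from a single factor that I would record first. (a) If $X\in\fivectlift{B\times F,B}$ then $\de\pi_F(X)=0$, and if $V\in\fivectlift{B\times F,F}$ then $\de\pi_B(V)=0$; this is part of the product structure of the lift (and is what makes the lift unique), since $\tilde X$ is $\pi_B$-related to its base field while projecting trivially under $\pi_F$, and symmetrically for fiber lifts. (b) A base lift annihilates every scalar that is a lift from the fiber, and symmetrically: for $h\in\fiscal F$ and $X\in\fivectlift{B\times F,B}$ the chain rule and (a) give $X(h\circ\pi_F)=\de h(\de\pi_F(X))=0$, and likewise $V(k\circ\pi_B)=0$ for $k\in\fiscal B$ and $V\in\fivectlift{B\times F,F}$.

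For (1) and (3) I would simply substitute into \eqref{eq_wp_metric}. Since $\de\pi_F(X)=\de\pi_B(V)=0$, both summands in the mixed inner product vanish, giving $\metric{X,V}=0$. For (3), the same substitution yields $\metric{X,Y}=\metric{\de\pi_B X,\de\pi_B Y}_B$, which is the lift $\metric{X_B,Y_B}_B\circ\pi_B$ of a scalar on $B$; applying the fiber lift $V$ and invoking the symmetric form of (b) gives $V\metric{X,Y}=0$.

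For (2) I would use the naturality of the Lie bracket under projections. Since $X$ is $\pi_B$-related to $X_B$ and $V$ is $\pi_B$-related to $0$, the bracket $[X,V]$ is $\pi_B$-related to $[X_B,0]=0$, so $\de\pi_B([X,V])=0$; symmetrically $\de\pi_F([X,V])=0$. A tangent vector of $B\times F$ annihilated by both $\de\pi_B$ and $\de\pi_F$ is zero, hence $[X,V]=0$. Equivalently, in product coordinates $X$ has coefficients depending only on the base variables and $V$ only on the fiber variables, so each term of the coordinate bracket vanishes.

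Finally, for (4) I would reduce via \eqref{eq_wp_metric} once more: because $\de\pi_B(V)=\de\pi_B(W)=0$, we get $\metric{V,W}=f^2\metric{V,W}_F$, where $\metric{V,W}_F=\metric{V_F,W_F}_F\circ\pi_F$ is a fiber lift while $f$ (that is, $f\circ\pi_B$) is a base lift. Applying the base lift $X$ and the Leibniz rule gives $X\metric{V,W}=X(f^2)\metric{V,W}_F+f^2 X(\metric{V,W}_F)$; the second term vanishes by (b) since $\metric{V,W}_F$ is a fiber lift, and $X(f^2)=2fX(f)$, producing the stated $2f\metric{V,W}_F X(f)$. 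I do not expect a genuine obstacle: the single substantive ingredient is the annihilation property (b) — equivalently, that differentials and brackets respect the product projections — and everything else is bookkeeping with \eqref{eq_wp_metric} and the Leibniz rule.
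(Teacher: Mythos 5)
Your proof is correct and takes essentially the same route as the paper: the paper declares (1) and (2) ``evident because the manifold is $B\times F$'' (your substitution into \eqref{eq_wp_metric} and the bracket-naturality argument are exactly the standard justifications), and for (3) and (4) the paper gives the same one-line computations you do — constancy of $\metric{X,Y}_B$ on fibers killed by the vertical field $V$, and the Leibniz expansion $X(f^2\metric{V,W}_F)=2f\metric{V,W}_F X(f)$ using that $\metric{V,W}_F$ is a fiber lift. You have merely made explicit the lift properties the paper treats as understood.
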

\begin{proof}
	\eqref{thm_wp_deg_fundam:mixed_metric} and \eqref{thm_wp_deg_fundam:mixed_lie_bracket} are evident because the manifold is $B\times F$.
	
	\eqref{thm_wp_deg_fundam:metric_B_constant_F}
	$\metric{X,Y}=\metric{X,Y}_B$ is constant on fibers, and $V\metric{X,Y}=0$ because $V$ is vertical.
	
	\eqref{thm_wp_deg_fundam:metric_F_var_B}
	$X\metric{V,W}=X(f^2\metric{V,W}_F)=2f\metric{V,W}_F X(f)$.
\end{proof}

The properties in the following propositions are similar to some properties of the Levi-Civita connection for the warped product of ({\nondeg}) {\semiriem} manifolds {\cf} \eg \cite{ONe83}, p. {206}, but in addition are valid for the degenerate case too. These properties and their proofs in the regular case can't be adapted immediately, because for degenerate metric the Levi-Civita connection is not defined, and also we need to avoid the index raising.
But by rewriting them in terms of the Koszul form, rather than the Levi-Civita connection, they work for degenerate warped products too.

\begin{proposition}
\label{thm_wp_deg_koszul}
Let $B \times_f F$ be a degenerate warped product, and let be the vector fields $X,Y,Z\in\fivectlift{B \times F,B}$ and $U,V,W\in\fivectlift{B \times F,F}$. Let $\kosz$ be the Koszul form on $B \times_f F$, and $\kosz_B, \kosz_F$ the lifts of the Koszul forms on $B$, respectively $F$. Then
\begin{enumerate}
	\item \label{thm_wp_deg_koszul:BBB}
	$\kosz(X,Y,Z)=\kosz_B(X,Y,Z)$.
	\item \label{thm_wp_deg_koszul:BBF}
	$\kosz(X,Y,W) = \kosz(X,W,Y) = \kosz(W,X,Y) = 0$.
	\item \label{thm_wp_deg_koszul:BFF}
	$\kosz(X,V,W) = \kosz(V,X,W) = -\kosz(V,W,X) = f \metric{V,W}_F X(f)$.
	\item \label{thm_wp_deg_koszul:FFF}
	$\kosz(U,V,W)=f^2\kosz_F(U,V,W)$.
\end{enumerate}
\end{proposition}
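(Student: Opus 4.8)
The plan is to evaluate the defining formula \eqref{eq_Koszul_form} for the Koszul form on each of the four index patterns in turn, substituting the vanishing relations of Proposition \ref{thm_wp_deg_fundam} together with three elementary facts about lifts: the bracket of two lifts from the same factor is again a lift from that factor, the inner products restrict as $\metric{X,Y}=\metric{X,Y}_B$ and $\metric{V,W}=f^2\metric{V,W}_F$, and a horizontal field acting on a $B$-lifted function (resp.\ a vertical field on an $F$-lifted function) reproduces the corresponding action on the factor. The order I would carry this out is: (i) the purely horizontal case \eqref{thm_wp_deg_koszul:BBB}; (ii) the mixed cases \eqref{thm_wp_deg_koszul:BBF} and \eqref{thm_wp_deg_koszul:BFF}; and (iii) the purely vertical case \eqref{thm_wp_deg_koszul:FFF}.

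For \eqref{thm_wp_deg_koszul:BBB} every metric term is a $B$-metric of horizontal lifts, every bracket $[X,Y]$ is horizontal, and each derivative $X\metric{Y,Z}$ is the lift of the $B$-action, so the six summands reproduce term by term those defining $\kosz_B(X,Y,Z)$. For \eqref{thm_wp_deg_koszul:BBF} I would show, in each of the arrangements $(X,Y,W)$, $(X,W,Y)$, $(W,X,Y)$, that all six summands vanish: every pairing of a horizontal with a vertical field is zero by Proposition \ref{thm_wp_deg_fundam}\eqref{thm_wp_deg_fundam:mixed_metric}, the mixed brackets vanish by \eqref{thm_wp_deg_fundam:mixed_lie_bracket}, the term $W\metric{X,Y}$ vanishes by \eqref{thm_wp_deg_fundam:metric_B_constant_F}, and $\metric{W,[X,Y]}=0$ since $[X,Y]$ is horizontal.

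For \eqref{thm_wp_deg_koszul:BFF} the single surviving contribution is the derivative of a vertical inner product, which by Proposition \ref{thm_wp_deg_fundam}\eqref{thm_wp_deg_fundam:metric_F_var_B} equals $X\metric{V,W}=2f\metric{V,W}_F X(f)$; tracking where this term sits and its sign in the arrangements $(X,V,W)$, $(V,X,W)$, $(V,W,X)$ and halving yields $f\metric{V,W}_F X(f)$ in the first two and its negative in the third, all remaining summands vanishing by the mixed-metric and mixed-bracket relations.

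The step I expect to require the most care is the purely vertical case \eqref{thm_wp_deg_koszul:FFF}. Here I would first observe that $U(f)=0$ for any vertical $U$, since $f=f\circ\pi_B$ depends only on the base; the Leibniz rule then gives $U\metric{V,W}=U(f^2\metric{V,W}_F)=f^2\,U\metric{V,W}_F$ with no term in $U(f)$, and likewise for the other two derivative summands. Because $\metric{V,W}=f^2\metric{V,W}_F$ and the brackets $[V,W]$ are vertical, every one of the six summands carries the common factor $f^2$, and the bracketed expression that remains is exactly the definition of $\kosz_F(U,V,W)$, yielding $\kosz(U,V,W)=f^2\kosz_F(U,V,W)$. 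The only genuine subtlety is confirming that $f^2$ factors cleanly out of the three derivative terms, which is precisely where the verticality condition $U(f)=0$ enters; everything else is bookkeeping of which summands vanish.
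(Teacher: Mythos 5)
Your proof is correct and takes essentially the same approach as the paper: direct substitution of Proposition \ref{thm_wp_deg_fundam} and the elementary lift identities into the defining formula \eqref{eq_Koszul_form}. The only differences are organizational: where you expand every permuted arrangement in \eqref{thm_wp_deg_koszul:BBF} and \eqref{thm_wp_deg_koszul:BFF} from scratch, the paper expands only the first arrangement of each and deduces the others from the metric and symmetry identities of Theorem \ref{thm_Koszul_form_props} (properties \eqref{thm_Koszul_form_props_commutYZ} and \eqref{thm_Koszul_form_props_commutXY}), and your explicit factoring of $f^2$ in case \eqref{thm_wp_deg_koszul:FFF} via $U(f)=0$ spells out what the paper merely asserts to ``follow from properties of the lifts''.
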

\begin{proof}
\eqref{thm_wp_deg_koszul:BBB} and \eqref{thm_wp_deg_koszul:FFF} follow from properties of the lifts of vector fields, the Definition \ref{def_Koszul_form} of the Koszul form, and the equation \eqref{eq_wp_metric}.

\eqref{thm_wp_deg_koszul:BBF}
By Definition \ref{def_Koszul_form},
\begin{equation*}
\begin{array}{llll}
	\kosz(X,Y,W) &=&\dsfrac 1 2 \{ X \metric{Y,W} + Y \metric{W,X} - W \metric{X,Y} \\
	&&\ - \metric{X,[Y,W]} + \metric{Y, [W,X]} + \metric{W, [X,Y]}\}
\end{array}
\end{equation*}
We apply the Proposition \ref{thm_wp_deg_fundam}. From the relation \eqref{thm_wp_deg_fundam:mixed_metric},
\begin{equation*}
\metric{Y,W}=\metric{W,X}=\metric{W,[X,Y]}=0,
\end{equation*}
from the relation \eqref{thm_wp_deg_fundam:mixed_lie_bracket} $[Y,W]=[W,X]=0$, from the relation \eqref{thm_wp_deg_fundam:metric_B_constant_F} $W \metric{X,Y}=0$. Therefore $\kosz(X,Y,W)=0$.

From \eqref{thm_Koszul_form_props_commutYZ} of the Theorem \ref{thm_Koszul_form_props} we obtain that
\begin{equation*}
	\kosz(X,W,Y) = X \metric{W,Y} - \kosz(X,Y,W) = 0.
\end{equation*}

From \eqref{thm_Koszul_form_props_commutXY} of the Theorem \ref{thm_Koszul_form_props} and from Proposition \ref{thm_wp_deg_fundam}\eqref{thm_wp_deg_fundam:mixed_lie_bracket} we obtain that
\begin{equation*}
	\kosz(W,X,Y) = \kosz(X,W,Y) - \metric{[X,W],Y}= 0.
\end{equation*}

\begin{equation*}
\begin{array}{lrl}
	\tag{\ref{thm_wp_deg_koszul:BFF}} \kosz(X,V,W) &:=&\dsfrac 1 2 \{ X \metric{V,W} + V \metric{W,X} - W \metric{X,V} \\
	&&\ - \metric{X,[V,W]} + \metric{V, [W,X]} + \metric{W, [X,V]}\}\\
	&=& \dsfrac 1 2 X \metric{V,W}
\end{array}
\end{equation*}
from Proposition \ref{thm_wp_deg_fundam}, using it as in the property \eqref{thm_wp_deg_koszul:BBF} of the present Proposition. By applying the property \eqref{thm_wp_deg_fundam:metric_F_var_B} we have $\kosz(X,V,W) = f \metric{V,W}_F X(f)$. From Theorem \ref{thm_Koszul_form_props} property \eqref{thm_Koszul_form_props_commutXY},
$$\kosz(V,X,W)=\kosz(X,V,W)-\metric{[X,V],W},$$
but since $[X,V]=0$, $\kosz(V,X,W) = f \metric{V,W}_F X(f)$ as well.

From Theorem \ref{thm_Koszul_form_props} property \eqref{thm_Koszul_form_props_commutYZ},
$$\kosz(V,W,X) = V\metric{W,X} - \kosz(V,X,W),$$
but since $\metric{W,X}=0$, the property \eqref{thm_wp_deg_koszul:BFF} of the present Proposition shows that 
$$\kosz(V,W,X) = -f\metric{V,W}_F X(f).$$
\end{proof}

Further, we will study some properties of the warped products, in situations when the warping function $f$ is allowed to vanish or to become negative, and when $(B,g_B)$ and $(F,g_F)$ are allowed to be singular and with variable signature.

\section{Degenerate warped products of {\semireg} manifolds}
\label{s_semi_reg_semi_riem_man_warped}

In the following we will provide the condition for a degenerate warped product of {\semireg} {\semiriem} manifolds to be a {\semireg} {\semiriem} manifold.

\begin{theorem}
\label{thm_rad_stat_semi_riem_man_warped}
Let $(B,g_B)$ and $(F,g_F)$ be two {\rstationary} {\semiriem} manifolds, and $f\in\fiscal{B}$ a smooth function so that $\de f\in\annihforms B$. Then, the warped product manifold $B \times_f F$ is a {\rstationary} {\semiriem} manifold.
\end{theorem}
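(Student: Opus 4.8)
The plan is to verify the defining condition \eqref{eq_radical_stationary_manifold}, namely that $\kosz(A,B,\_)\in\annihforms{B\times F}$ for all $A,B\in\fivect{B\times F}$, by combining the block structure of the warped metric with the explicit Koszul formulas of Proposition \ref{thm_wp_deg_koszul}. First I would describe $\annih{(T_pM)}$ at a point $p=(p_B,p_F)$. Because the metric \eqref{eq_wp_metric} is block-diagonal for the splitting $T_p(B\times F)\cong T_{p_B}B\oplus T_{p_F}F$ (Proposition \ref{thm_wp_deg_fundam}\eqref{thm_wp_deg_fundam:mixed_metric}), the flat map $Y\mapsto\metric{Y,\_}$ is block-diagonal as well: on $T_{p_B}B$ it acts by $\metric{\cdot,\cdot}_B$ and on $T_{p_F}F$ by $f^2\metric{\cdot,\cdot}_F$. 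Consequently a covector belongs to $\annih{(T_pM)}$ exactly when its $B$-component lies in $\annih{(T_{p_B}B)}$ and its $F$-component lies in $f^2\,\annih{(T_{p_F}F)}$; in particular its $F$-component is forced to vanish wherever $f=0$.

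Next I would reduce to the case in which $A$ and $B$ are lifts. Since the condition is pointwise, I may work in a product chart, where every vector field is an $\fiscal{B\times F}$-linear combination of the coordinate fields, each of which is the lift of a coordinate field on $B$ or on $F$. The $\fiscal{B\times F}$-linearity in the first and third arguments and the Leibniz rule in the second (Theorem \ref{thm_Koszul_form_props}\eqref{thm_Koszul_form_props_flinearX}, \eqref{thm_Koszul_form_props_flinearY}, \eqref{thm_Koszul_form_props_flinearZ}), together with the observations that $\annihforms{B\times F}$ is a module over $\fiscal{B\times F}$ and that $\metric{Y,\_}\in\annihforms{B\times F}$ for every $Y$ (so the extra Leibniz term $A(b)\metric{B,\_}$ is harmless), reduce the general statement to the case of lifts.

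The core is then a case analysis on whether each of $A,B$ is a $B$-lift or an $F$-lift, reading $\kosz(A,B,\_)$ directly off Proposition \ref{thm_wp_deg_koszul}. If both are $B$-lifts $X,Y$, then by \eqref{thm_wp_deg_koszul:BBF} and \eqref{thm_wp_deg_koszul:BBB} the form $\kosz(X,Y,\_)$ has vanishing $F$-component and $B$-component $\kosz_B(X,Y,\_)$, which lies in $\annihforms{B}$ since $B$ is {\rstationary}. In the two mixed cases the $B$-component vanishes by \eqref{thm_wp_deg_koszul:BBF}, while by \eqref{thm_wp_deg_koszul:BFF} the $F$-component is $f\,X(f)\metric{V,\_}_F$; this lies in $f^2\,\annih{(T_{p_F}F)}$ — it is zero where $f=0$ and a scalar multiple of $\metric{V,\_}_F\in\annih{(T_{p_F}F)}$ where $f\neq0$ — so it is admissible.

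The decisive, and final, case is when both $A=U$ and $B=V$ are $F$-lifts. By \eqref{thm_wp_deg_koszul:FFF} the $F$-component of $\kosz(U,V,\_)$ is $f^2\kosz_F(U,V,\_)$, which lies in $f^2\,\annih{(T_{p_F}F)}$ because $F$ is {\rstationary}; and by \eqref{thm_wp_deg_koszul:BFF} the $B$-component is $X\mapsto-f\metric{U,V}_F\,\de f(X)$, i.e. the pointwise scalar multiple $-f\metric{U,V}_F\,\de f$ of $\de f$. This $B$-component lies in $\annih{(T_{p_B}B)}$ precisely because $\de f\in\annihforms{B}$, which is exactly where the hypothesis on the warping function enters; I expect this to be the only genuinely nontrivial point of the argument. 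Collecting the four cases yields $\kosz(A,B,\_)\in\annihforms{B\times F}$, so $B\times_f F$ is {\rstationary}.
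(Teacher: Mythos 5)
Your proof is correct, and its skeleton matches the paper's: reduce to lifts, run the case analysis on the types of the two arguments through Proposition \ref{thm_wp_deg_koszul}, and use the hypothesis $\de f\in\annihforms B$ precisely in the case where both arguments are fiber lifts. The difference lies in the membership test for $\annihforms{B\times F}$. The paper silently invokes the pointwise duality between $\annih{(T_pM)}$ and the radical: it restates the claim as $\kosz(X,Y,W)=0$ for every $W\in\fivectnull{B\times_f F}$, decomposes such a $W$ into its $B$- and $F$-parts, and each case becomes a one-line vanishing computation; for instance, the decisive case reads $\kosz(X_F,Y_F,W_B)=-f\metric{X_F,Y_F}_F W_B(f)=0$ because $W_B(f)=\de f(W_B)=0$ for radical $W_B$, which is where the hypothesis enters. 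You instead verify the definition literally, by computing the image of the flat map of the block-diagonal warped metric, $\annih{(T_p(B\times F))}\cong\annih{(T_{p_B}B)}\oplus f^2\annih{(T_{p_F}F)}$, and placing the two components of $\kosz(\cdot,\cdot,\_)$ in the respective summands. Both are valid, and the two membership criteria are equivalent by finite-dimensional linear algebra. Your route is more self-contained (it never appeals to the equivalence between the image of the flat map and the annihilator of the radical) and it makes the reduction to lifts explicit, Leibniz term $\metric{\cdot,\_}$ included, a step the paper glosses over; the paper's route buys lighter case-work, since one only shows scalars vanish rather than locating covectors inside a pointwise-varying subspace (note that your $F$-summand $f^2\annih{(T_{p_F}F)}$ jumps in dimension across $\{f=0\}$, a subtlety your separate treatment of the loci $f\neq 0$ and $f=0$ handles correctly).
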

\begin{proof}
We have to show that $\kosz(X,Y,W)=0$ for any $X,Y\in\fivect{B \times_f F}$ and $W\in\fivectnull{B \times_f F}$. It is enough to check this for vector fields which are lifts of vector fields $X_B,Y_B,W_B\in \fivectlift{B \times F,B}$,
$X_F,Y_F,W_F\in \fivectlift{B \times F,F}$, where $W_B,W_F\in\fivectnull{B \times_f F}$. Then, from the Proposition \ref{thm_wp_deg_koszul}:
\begin{enumerate}
	\item 
	$\kosz(X_B,Y_B,W_B)=\kosz_B(X_B,Y_B,W_B)=0$,
	\item
	$\kosz(X_B,Y_B,W_F) = \kosz(X_B,Y_F,W_B) = \kosz(X_F,Y_B,W_B) = 0$,
	\item
	$\kosz(X_B,Y_F,W_F) = \kosz(Y_F,X_B,W_F) = f \metric{Y_F,W_F}_F X_B(f) = 0$, because $\metric{Y_F,W_F}_F=0$, and\\
	$\kosz(X_F,Y_F,W_B) = -f \metric{X_F,Y_F}_F W_B(f) = 0$, from $W_B(f)=0$,
	\item
	$\kosz(X_F,Y_F,W_F)=f^2\kosz_F(X_F,Y_F,W_F) = 0$.
\end{enumerate}
\end{proof}

\begin{theorem}
\label{thm_semi_reg_semi_riem_man_warped}
Let $(B,g_B)$ and $(F,g_F)$ be two {\semireg} {\semiriem} manifolds, and $f\in\fiscal{B}$ a smooth function so that $\de f\in\srformsk 1 B$. Then, the warped product manifold $B \times_f F$ is a {\semireg} {\semiriem} manifold.
\end{theorem}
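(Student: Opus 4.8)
The plan is to verify semi-regularity through the contraction criterion of Proposition~\ref{thm_sr_cocontr_kosz}, rather than unwinding Definition~\ref{def_cov_der_smooth} directly. Since $\de f\in\srformsk 1 B\subseteq\annihforms B$, the hypothesis of Theorem~\ref{thm_rad_stat_semi_riem_man_warped} holds, so $B\times_f F$ is already {\rstationary} and it remains only to show that $\kosz(X,Y,\cocontr)\kosz(Z,T,\cocontr)\in\fiscal{B\times_f F}$ for all vector fields $X,Y,Z,T$. This contraction is $\fiscal{}$-linear in its first and third slots and, by the Leibniz rule of Theorem~\ref{thm_Koszul_form_props}, changes in the second and fourth slots only by correction terms of the form $X(h)\kosz(Z,T,Y)$, i.e. smooth functions times ordinary Koszul values $\kosz(\cdot,\cdot,\cdot)$, which are smooth. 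Hence it suffices to check smoothness when $X,Y,Z,T$ each run over lifts from $B$ or from $F$.

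For the case analysis I would use two facts: Proposition~\ref{thm_wp_deg_koszul}, which expresses every $\lderb{\,\cdot\,}{\cdot}=\kosz(\cdot,\cdot,\cocontr)$ in terms of $\kosz_B$, $\kosz_F$ and $\de f$; and the orthogonal splitting of $\annihprod{\,\cdot\,,\cdot}$ into a base part and a fiber part forced by the block-diagonal form \eqref{eq_wp_metric}. A lift $\lderb{X}{Y}$ of two base fields is purely in the base direction and equals $\kosz_B(X,Y,\cocontr)$; a mixed lift $\lderb{X}{V}$ is purely in the fiber direction and proportional to $X(f)$; and $\lderb{V}{W}$ of two fiber fields splits as a base part $-f\metric{V,W}_F\de f$ plus a fiber part $f^2\kosz_F(V,W,\cocontr)$. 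Contracting these, most combinations are manifestly smooth: the base--base term is $\kosz_B(X,Y,\cocontr)\kosz_B(Z,T,\cocontr)$, smooth because $B$ is {\semireg}; the pure fiber--fiber term carries a factor $f^2$ against $\kosz_F(\cdot,\cdot,\cocontr)\kosz_F(\cdot,\cdot,\cocontr)$, smooth because $F$ is {\semireg}; the mixed contractions collapse to expressions such as $X(f)Y(f)\metric{V,W}_F$ or $fX(f)\kosz_F(\cdots)$, which are smooth; and every cross term pairing a base Koszul form with the $\de f$-part of a fiber lift is of the form $-f\metric{V,W}_F\annihprod{\lderb{X}{Y},\de f}$, smooth because $\de f\in\srformsk 1 B$ forces the base covariant derivative $\der_X\de f$ into $\annihforms B$, whence $\annihprod{\lderb{X}{Y},\de f}=X(Yf)-(\der_X\de f)(Y)\in\fiscal B$ by Definition~\ref{def_cov_der_covect}.

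The one genuinely delicate term, and the step I expect to be the main obstacle, is the base part of the all-fiber contraction $\annihprod{\lderb{V}{W},\lderb{U}{Z}}$, which reduces to $f^2\metric{V,W}_F\metric{U,Z}_F\,\annihprod{\de f,\de f}=\frac 1 4\metric{V,W}_F\metric{U,Z}_F\,\annihprod{\de(f^2),\de(f^2)}$. On the locus where $g_B$ is {\nondeg} this is clearly smooth, but across the degeneracy set of $g_B$ the factor $f^2$ gives no help, so one really needs $\annihprod{\de f,\de f}\in\fiscal B$, i.e. the self-pairing of the $\srformsk 1 B$-form $\de f$ to be smooth. This is precisely what the hypothesis $\de f\in\srformsk 1 B$---rather than the weaker $\de f\in\annihforms B$ that sufficed for radical-stationarity in Theorem~\ref{thm_rad_stat_semi_riem_man_warped}---is designed to secure: the defining property $\der_X\de f\in\annihforms B$ is the analogue for $\de f$ of the Koszul-contraction smoothness that characterizes {\semireg}ity in Proposition~\ref{thm_sr_cocontr_kosz}, and it is exactly the condition making $\de f$ pair smoothly with itself. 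Establishing the smoothness of $\annihprod{\de f,\de f}$ from $\de f\in\srformsk 1 B$ is the crux; once it is in hand, collecting all the cases above shows $\kosz(X,Y,\cocontr)\kosz(Z,T,\cocontr)\in\fiscal{B\times_f F}$, and Proposition~\ref{thm_sr_cocontr_kosz} then yields that $B\times_f F$ is {\semireg}.
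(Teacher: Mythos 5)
Your strategy coincides with the paper's own proof: Theorem \ref{thm_rad_stat_semi_riem_man_warped} gives that $B\times_f F$ is {\rstationary}, Proposition \ref{thm_sr_cocontr_kosz} reduces semi-regularity to smoothness of the contractions $\kosz(\cdot,\cdot,\cocontr)\kosz(\cdot,\cdot,\cocontr)$, one restricts to lifts, and evaluates case by case via Proposition \ref{thm_wp_deg_koszul}. Your cases in which at least one of the two argument pairs comes from the base agree with the five cases displayed in the paper, including the decisive use of
$\annihprod{\lderb{X}{Y},\de f}_B=X(Y(f))-(\der_X\de f)(Y)\in\fiscal B$,
which is exactly where $\de f\in\srformsk{1}{B}$ enters. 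In one respect you are \emph{more} complete than the paper: the paper's proof never treats the all-fiber contraction $\kosz(U,V,\cocontr)\kosz(W,Q,\cocontr)$ with $U,V,W,Q\in\fivectlift{B\times F,F}$ --- precisely the case you isolate as the crux.

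However, your resolution of that case is a genuine gap, and it cannot be closed from the stated hypotheses. What the all-fiber case needs is smoothness of $f^2\annihprod{\de f,\de f}_B$ (the base parts of $\kosz(U,V,\cdot)$ and $\kosz(W,Q,\cdot)$ being $-f\metric{U,V}_F\de f$ and $-f\metric{W,Q}_F\de f$; incidentally the factor $f^2$ \emph{can} help, contrary to your remark, but not enough). This smoothness is not a consequence of $\de f\in\srformsk{1}{B}$: Definition \ref{def_cov_der_covect} controls only the pairings of $\de f$ against Koszul forms $\lderb{X}{Y}$, and $\de f$ is in general neither such a form nor a combination of such forms with bounded coefficients, so nothing forces $\de f$ to pair smoothly with itself. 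Concretely, take $B=(\R^2,\ \de x\otimes\de x+\alpha^2\de y\otimes\de y)$ with $\alpha(x)=e^{-1/x^2}$, $\alpha(0)=0$, and $f=1+\beta y$, where $\beta:=e^{-1/(2x^2)}$, so $\beta^2=\alpha$. The nonzero Koszul forms of $B$ are $\lderb{\partial_x}{\partial_y}=\lderb{\partial_y}{\partial_x}=\alpha\alpha'\de y$ and $\lderb{\partial_y}{\partial_y}=-\alpha\alpha'\de x$; their mutual contractions are $(\alpha')^2$, $(\alpha\alpha')^2$ and $0$, all smooth, so $B$ is {\semireg}. Also $\de f=\beta'y\,\de x+\beta\,\de y\in\annihforms B$, and using $\alpha'\beta/\alpha=2\beta'$ one finds
\begin{equation*}
  \der_{\partial_x}\de f=\beta''y\,\de x-\beta'\,\de y,
  \qquad
  \der_{\partial_y}\de f=-\beta'\,\de x+\alpha\alpha'\beta'y\,\de y,
\end{equation*}
both smooth and {\rannih}; by $\fiscal{}$-linearity in $X$ this gives $\de f\in\srformsk{1}{B}$. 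Nevertheless
\begin{equation*}
  \annihprod{\de f,\de f}_B=(\beta'y)^2+\beta^2/\alpha^2=(\beta'y)^2+e^{1/x^2},
\end{equation*}
and along $y=0$, where $f\equiv 1$, even $f^2\annihprod{\de f,\de f}_B$ is unbounded as $x\to 0$. Taking $F=(\R,\de z\otimes\de z)$, the all-fiber contraction of $B\times_f F$ equals $f^2(\partial_xf)^2+f^2(\partial_yf)^2/\alpha^2$, which blows up near $x=0$, so $B\times_f F$ is \emph{not} {\semireg} although $B$ and $F$ are and $\de f\in\srformsk{1}{B}$. So the step you call the crux is not merely left unproved in your write-up: it is false as stated, and the theorem itself needs a stronger hypothesis on $f$, for instance $f^2\annihprod{\de f,\de f}_B\in\fiscal B$ (equivalently, smoothness of $\annihprod{\de(f^2),\de(f^2)}_B$), which is also the quantity entering Theorem \ref{thm_wp_nondeg_riemm_tens}\eqref{thm_wp_nondeg_riemm_tens:FFFF}. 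The same defect affects the paper's own proof, which silently omits this case; your analysis has the merit of locating exactly where it hides.
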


\begin{proof}
All contractions of the form $\kosz(X,Y,\cocontr)\kosz(Z,T,\cocontr)$ are well defined, according to Theorem \ref{thm_rad_stat_semi_riem_man_warped}. 
From Proposition \ref{thm_sr_cocontr_kosz}, it is enough to show that they are smooth.
It is enough to check this for vector fields which are lifts of vector fields $X_B,Y_B,Z_B,T_B\in \fivectlift{B \times F,B}$,
$X_F,Y_F,Z_F,T_F\in \fivectlift{B \times F,F}$.
Let's denote by $\cocontr_B$ and $\cocontr_F$ the symbol for the covariant contraction on $B$, respectively $F$.
Then, from the Proposition \ref{thm_wp_deg_koszul}:
\begin{equation*}
\begin{array}{lll}
\kosz(X_B,Y_B,\cocontr)\kosz(Z_B,T_B,\cocontr)&=&\kosz(X_B,Y_B,\cocontr_B)\kosz(Z_B,T_B,\cocontr_B) \\
&&+\kosz(X_B,Y_B,\cocontr_F)\kosz(Z_B,T_B,\cocontr_F) \\
&=&\kosz_B(X_B,Y_B,\cocontr_B)\kosz_B(Z_B,T_B,\cocontr_B) \\
&\in& \fiscal{B \times_f F}. \\

\kosz(X_B,Y_B,\cocontr)\kosz(Z_F,T_B,\cocontr)&=&\kosz(X_B,Y_B,\cocontr)\kosz(Z_B,T_F,\cocontr)\\
&=&\kosz(X_B,Y_B,\cocontr_B)\kosz(Z_B,T_F,\cocontr_B) \\
&&+\kosz(X_B,Y_B,\cocontr_F)\kosz(Z_B,T_F,\cocontr_F) = 0. \\

\kosz(X_B,Y_B,\cocontr)\kosz(Z_F,T_F,\cocontr)&=&\kosz(X_B,Y_B,\cocontr_B)\kosz(Z_F,T_F,\cocontr_B) \\
&&+\kosz(X_B,Y_B,\cocontr_F)\kosz(Z_F,T_F,\cocontr_F) \\
&=&-\kosz_B(X_B,Y_B,\cocontr_B)f \metric{Z_F,T_F}_F \de f(\cocontr_B) \\
&=&-f\metric{Z_F,T_F}_F (\nabla^B_{X_B}{Y_B})(\de f) \\
&\in& \fiscal{B \times_f F}. \\

\kosz(X_B,Y_F,\cocontr)\kosz(T_F,Z_B,\cocontr)&=&\kosz(X_B,Y_F,\cocontr)\kosz(Z_B,T_F,\cocontr)\\
&=&\kosz(X_B,Y_F,\cocontr_B)\kosz(Z_B,T_F,\cocontr_B) \\
&&+\kosz(X_B,Y_F,\cocontr_F)\kosz(Z_B,T_F,\cocontr_F) \\
&=&f \metric{Y_F,\cocontr_F}_F X_B(f)\kosz(Z_B,T_F,\cocontr_F) \\
&=& f^3 X_B(f)\kosz_F(Z_B,T_F,Y_F) \\
&\in& \fiscal{B \times_f F}. \\
\end{array}
\end{equation*}

\begin{equation*}
\begin{array}{lll}
\kosz(X_B,Y_F,\cocontr)\kosz(Z_F,T_F,\cocontr)&=&\kosz(X_B,Y_F,\cocontr_B)\kosz(Z_F,T_F,\cocontr_B) \\
&&+\kosz(X_B,Y_F,\cocontr_F)\kosz(Z_F,T_F,\cocontr_F) \\
&=&f^3 X_B(f) \metric{Y_F,\cocontr_F}_F\kosz_F(Z_F,T_F,\cocontr_F) \\
&=&f^3 X_B(f) \kosz_F(Z_F,T_F,Y_F) \\
&\in& \fiscal{B \times_f F}. \\
\end{array}
\end{equation*}
\end{proof}

\begin{remark}
Even though $(B,g_B)$ and $(F,g_F)$ are {\nondeg} {\semiriem} manifolds, if the function $f$ becomes $0$, the warped product manifold $B\times_f F$ is a singular {\semiriem} manifold.
\end{remark}

\begin{corollary}
Let's consider that $(B,g_B)$ is a {\nondeg} {\semiriem} manifold, and let $f\in\fiscal{B}$. If $(F,g_F)$ is {\rstationary}, then the warped product $B\times_f F$ also is {\rstationary}. If $(F,g_F)$ is {\semireg}, then the warped product $B\times_f F$ also is {\semireg}. In particular, if both manifolds $(B,g_B)$ and $(F,g_F)$ are {\nondeg}, and the warping function $f\in\fiscal{B}$, then $B\times_f F$ is {\semireg}.
\end{corollary}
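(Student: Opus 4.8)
The plan is to deduce this corollary directly from Theorems \ref{thm_rad_stat_semi_riem_man_warped} and \ref{thm_semi_reg_semi_riem_man_warped}; the only real work is to check that a {\nondeg} factor $(B,g_B)$ automatically meets every hypothesis those theorems impose on the base and on the warping function.

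First I would record the elementary consequences of the metric $g_B$ being {\nondeg}. Then $\radix{(T_pB)}=\{0\}$ at each point, so $\fivectnull B=\{0\}$, and the map $X_p\mapsto\metric{X_p,\cdot}$ from $T_pB$ to $T^*_pB$ is a linear isomorphism. Hence every covector is of the form $\metric{Y_p,\cdot}$, that is, $\annih{(T_pB)}=T^*_pB$, and therefore $\annihforms B=\fiformk 1 B$ is the full space of smooth $1$-forms. In particular the {\rstationary} condition $\kosz(X,Y,\_)\in\annihforms B$ holds vacuously, so $B$ is {\rstationary}.

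Next I would show that $B$ is in fact {\semireg}, i.e. $\srformsk 1 B=\fiformk 1 B$. Since $\annihforms B$ already equals $\fiformk 1 B$, the only thing to check is that $\der_X\omega$ is smooth for every smooth $\omega$. Here I would unwind Definition \ref{def_cov_der_covect}: when $g_B$ is {\nondeg}, $\ldera X Y$ is the metric dual of the Levi-Civita derivative $\nabla_X Y$ and $\annihprod{\cdot,\cdot}$ is the dual metric, so $(\der_X\omega)(Y)$ reduces to $X(\omega(Y))-\omega(\nabla_X Y)$, the classical covariant derivative of a $1$-form, which is smooth whenever $\omega$ is. Thus $\der_X\omega\in\fiformk 1 B=\annihforms B$ and $B$ is {\semireg}. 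As an immediate consequence, for any $f\in\fiscal B$ we have $\de f\in\fiformk 1 B=\annihforms B=\srformsk 1 B$, so both warping-function hypotheses hold for free.

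With these facts in hand the three assertions are formal. If $F$ is {\rstationary}, then $B$ and $F$ are both {\rstationary} and $\de f\in\annihforms B$, so Theorem \ref{thm_rad_stat_semi_riem_man_warped} gives that $B\times_f F$ is {\rstationary}. If $F$ is {\semireg}, then $B$ and $F$ are both {\semireg} and $\de f\in\srformsk 1 B$, so Theorem \ref{thm_semi_reg_semi_riem_man_warped} gives that $B\times_f F$ is {\semireg}. The concluding ``in particular'' then follows from the second assertion, since a {\nondeg} $F$ is itself {\semireg} by the same argument applied to $F$. The one delicate point, which I would write out with care, is the reduction of the operator of Definition \ref{def_cov_der_covect} to the classical covariant derivative in the {\nondeg} case, as this is exactly what secures $\srformsk 1 B=\fiformk 1 B$; everything else is a direct invocation of the two theorems.
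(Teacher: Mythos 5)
Your proposal is correct and follows the same route as the paper: both deduce the corollary from Theorems \ref{thm_rad_stat_semi_riem_man_warped} and \ref{thm_semi_reg_semi_riem_man_warped} after observing that a {\nondeg} base $(B,g_B)$ is automatically {\rstationary} and {\semireg}, with $\annihforms B=\srformsk 1 B=\fiformk 1 B$, so that any $f\in\fiscal B$ satisfies both warping-function hypotheses. You simply spell out the verification (the musical isomorphism, and the reduction of Definition \ref{def_cov_der_covect} to the classical covariant derivative) that the paper's two-line proof leaves implicit.
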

\begin{proof}
If the manifold $(B,g_B)$ is {\nondeg}, then any function $f\in\fiscal{B}$ also satisfies $\de f\in\annihforms B$ and $\de f\in\srformsk 1 B$. Then the corollary follows from Theorems \ref{thm_rad_stat_semi_riem_man_warped} and \ref{thm_semi_reg_semi_riem_man_warped}.
\end{proof}

\begin{proposition}[The case $f\equiv 0$]
$B\times_0 F$ is a singular {\semiriem} manifold with degenerate metric of constant $\rank g=\dim B$.
\end{proposition}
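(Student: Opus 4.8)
The plan is to specialize the warped-product metric to $f\equiv 0$ and read off its radical pointwise. Substituting $f\equiv 0$ into \eqref{eq_wp_metric}, the coefficient $f^2(p)$ vanishes at every $p\in B\times F$, so the fiber contribution disappears entirely and the metric collapses to
\begin{equation*}
	\metric{x,y}=\metric{\de\pi_B(x),\de\pi_B(y)}_B
\end{equation*}
for all $x,y\in T_p(B\times F)$. In other words, $g$ is nothing but the pullback $\pi_B^*(g_B)$ of the base metric, with no dependence on the fiber.

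First I would use the product splitting $T_{(p_B,p_F)}(B\times F)\cong T_{p_B}(B)\oplus T_{p_F}(F)$ to locate the radical at each point $p=(p_B,p_F)$. For a vector $x$ tangent to the fiber $p_B\times F$ we have $\de\pi_B(x)=0$, hence $\metric{x,y}=0$ for every $y$; therefore the whole fiber tangent space $T_{p_F}(F)$ lies inside the radical $\radix{(T_p(B\times F))}$, contributing $\dim F$ null directions. The mixed terms between base and fiber directions vanish by Proposition \ref{thm_wp_deg_fundam}\eqref{thm_wp_deg_fundam:mixed_metric}, and on the base directions $g$ restricts exactly to $g_B$.

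It then remains to check that no extra null directions are hidden among the base directions, i.e. that the radical is precisely $T_{p_F}(F)$. This is where the nondegeneracy of $(B,g_B)$ enters: since $g|_{T_{p_B}(B)}=g_B$ has trivial radical, the block structure of $g$ along the splitting (a null fiber block, a vanishing mixed block, and a {\nondeg} base block) gives $\radix{(T_p(B\times F))}=T_{p_F}(F)$, and hence $\rank g=\dim(B\times F)-\dim F=\dim B$. Because $g_B$ is {\nondeg} at every point of $B$, this value is the same at all $p$, so $\rank g=\dim B$ is constant; and as soon as $\dim F\geq 1$ the radical is nonzero, so $g$ is degenerate.

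I expect no genuine obstacle here: the argument is essentially the block-diagonalization of $g$ along the product splitting, so the rank of $g$ equals that of $g_B$. The only points needing care are that the mixed block vanishes, which is already recorded in Proposition \ref{thm_wp_deg_fundam}, and that the base block contributes no additional degeneracy, which is exactly the place where the nondegeneracy of $g_B$ is used to pin the rank down to $\dim B$ rather than merely to $\rank g_B$.
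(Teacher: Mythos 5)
Your proof is correct, but it follows a genuinely different route from the paper's: the paper gives no direct argument at all, simply citing \citep{Kup87b}{287} for the proof and remarking that Kupeli in fact shows more, namely that every {\rstationary} {\semiriem} manifold is locally a warped product of the form $B\times_0 F$. Your argument is instead a self-contained pointwise computation: with $f\equiv 0$ the metric collapses to $\pi_B^*(g_B)$, the splitting $T_{(p_B,p_F)}(B\times F)\cong T_{p_B}(B)\oplus T_{p_F}(F)$ exhibits $g$ in block form with vanishing fiber and mixed blocks, so the radical contains $T_{p_F}(F)$, and since $g_B$ is {\nondeg} the radical is exactly $T_{p_F}(F)$; hence $\rank g=\dim B$ at every point, the rank is constant, and $g$ is degenerate as soon as $\dim F\geq 1$. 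What your approach buys is transparency: in particular it makes explicit the hypothesis --- implicit in the proposition, and inherited from the preceding corollary and from Kupeli's setting --- that $(B,g_B)$ is {\nondeg}; without it the conclusion would only be $\rank g=\rank g_B$, which need be neither $\dim B$ nor constant. What the paper's citation buys is the stronger structural statement (Kupeli's local converse, that {\rstationary} manifolds are locally of this form), which a pointwise rank computation cannot give and which your argument does not need.
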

\begin{proof}
The proof can be found in \citep{Kup87b}{287}.
In fact, Kupeli does even more in \cite{Kup87b}, by showing that any {\rstationary} {\semiriem} manifold is locally a warped product of the form $B\times_0 F$.
\end{proof}

\begin{remark}
The warped product of {\nondeg} {\semiriem} manifolds stays {\nondeg} for $f>0$. If $f\to 0$, we can see for example from \cite{ONe83} that the connection $\nabla$ (\citep{ONe83}{206--207}), the Riemann curvature $R_\nabla$ (\citep{ONe83}{209--210}), the Ricci tensor $\ric$ and the scalar curvature $s$ (\citep{ONe83}{211}) diverge in general.
\end{remark}

\section{Riemann curvature of {\semireg} warped products}
\label{s_riemann_wp_deg}

In this section we will assume $(B,g_B)$ and $(F,g_F)$ to be {\semireg} {\semiriem} manifolds, $f\in\fiscal{B}$ a smooth function so that $\de f\in\srformsk 1 B$, and $B\times_f F$ the warped product of $B$ and $F$.
The central point is to find the relation between the Riemann curvature $R$ of $B\times_f F$ and those on $(B,g_B)$ and $(F,g_F)$. The relations are similar to those for the {\nondeg} case \citepcf{ONe83}{210--211} for the Riemann curvature operator $R(\_,\_)$, but since this operator is not well defined and is divergent for degenerate metric, we need to use the Riemann curvature tensor $R(\_,\_,\_,\_)$. The proofs given here are based only on formulae which work for the degenerate case as well.

\begin{definition}
\label{def_hessian}
Let $(M,g)$ be a {\semireg} {\semiriem} manifold. The \textit{Hessian} of a scalar field $f$ satisfying $\de f\in\metricformsk 1 M$ is the smooth tensor field $H^f\in\tensors 0 2{M}$ defined by
\begin{equation}
	H^f(X,Y) := \left(\der_X\de f\right)(Y)
\end{equation}
for any $X,Y\in\fivect{M}$.
\end{definition}

\begin{theorem}
\label{thm_wp_nondeg_riemm_tens}
Let $B \times_f F$ be a degenerate warped product of {\semireg} {\semiriem} manifolds with $f\in\fiscal{B}$ a smooth function so that $\de f\in\srformsk 1 B$, and $R_B, R_F$ the lifts of the Riemann curvature tensors on $B$ and $F$. Let $X,Y,Z,T\in\fivectlift{B \times F,B}$, $U,V,W,Q\in\fivectlift{B \times F,F}$, and let $H^f$ be the \textit{Hessian} of $f$ (which exists because $\de f\in\srformsk 1 B$, see Definition \ref{def_hessian}. Then:
\begin{enumerate}
	\item \label{thm_wp_nondeg_riemm_tens:BBBB}
	$R(X,Y,Z,T) = R_B(X,Y,Z,T)$
	\item \label{thm_wp_nondeg_riemm_tens:BBBF}
	$R(X,Y,Z,Q) = 0$
	\item \label{thm_wp_nondeg_riemm_tens:BBFF}
	$R(X,Y,W,Q) = 0$
	\item \label{thm_wp_nondeg_riemm_tens:FFBF}
	$R(U,V,Z,Q) = 0$
	\item \label{thm_wp_nondeg_riemm_tens:BFFB}
	$R(X,V,W,T) = -fH^f(X,T)\metric{V,W}_F$
	\item \label{thm_wp_nondeg_riemm_tens:FFFF}
	$\begin{aligned}[t]
          R(U,V,W,Q)=&R_F(U,V,W,Q) \\
          &+ f^2 \annihprod{\de f,\de f}_B\big(\metric{U,W}_F\metric{V,Q}_F \\
					&- \metric{V,W}_F\metric{U,Q}_F\big)
       \end{aligned}$
\end{enumerate}
the other cases being obtained by the symmetries of the Riemann curvature tensor.
\end{theorem}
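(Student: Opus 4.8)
The plan is to compute each of the six curvature components directly from the tensorial Koszul formula \eqref{eq_riemann_curvature_tensor_koszul_formula} in Proposition \ref{thm_riemann_curvature_tensor_koszul_formula}, which expresses $R(A,B,C,D)$ as $A\,\kosz(B,C,D)-B\,\kosz(A,C,D)-\kosz([A,B],C,D)+\kosz(A,C,\cocontr)\kosz(B,D,\cocontr)-\kosz(B,C,\cocontr)\kosz(A,D,\cocontr)$. Since all arguments are lifts, the Lie bracket terms simplify drastically: by Proposition \ref{thm_wp_deg_fundam}\eqref{thm_wp_deg_fundam:mixed_lie_bracket} every mixed bracket $[X,V]$ vanishes, and a pure bracket $[X,Y]$ (resp. $[U,V]$) is itself a lift from $B$ (resp. $F$). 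The strategy for each case is therefore mechanical: feed the correct type pattern into the formula, evaluate each of the five summands using Proposition \ref{thm_wp_deg_koszul} for the individual Koszul factors, and collect terms. I would split the contraction $\kosz(\cdot,\cdot,\cocontr)$ into its $B$-part $\cocontr_B$ and $F$-part $\cocontr_F$ exactly as in the proof of Theorem \ref{thm_semi_reg_semi_riem_man_warped}, since that bookkeeping is what converts the abstract contraction into recognizable curvature or Hessian expressions.

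The pure cases are the easiest and I would do them first. For \eqref{thm_wp_nondeg_riemm_tens:BBBB}, every Koszul factor is $\kosz_B$ by \ref{thm_wp_deg_koszul}\eqref{thm_wp_deg_koszul:BBB}, the bracket is a $B$-lift, and the $\cocontr_F$ contributions vanish; the formula collapses term-by-term to the $B$-version of \eqref{eq_riemann_curvature_tensor_koszul_formula}, giving $R_B(X,Y,Z,T)$. Case \eqref{thm_wp_nondeg_riemm_tens:FFFF} is the substantive one among the ``pure'' cases: the leading derivative and bracket terms reproduce $f^2 R_F(U,V,W,Q)$ after carefully tracking the Leibniz contributions of $f^2$ (using Theorem \ref{thm_Koszul_form_props}\eqref{thm_Koszul_form_props_flinearY} and Proposition \ref{thm_wp_deg_fundam}\eqref{thm_wp_deg_fundam:metric_F_var_B}), while the $\cocontr_B$ part of the two quadratic terms produces, via \ref{thm_wp_deg_koszul}\eqref{thm_wp_deg_koszul:BFF}, factors of the form $f\metric{\cdot,\cdot}_F\,\de f(\cocontr_B)$ whose product reassembles the $f^2\annihprod{\de f,\de f}_B$ curvature-of-the-sphere correction term. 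The vanishing cases \eqref{thm_wp_nondeg_riemm_tens:BBBF}, \eqref{thm_wp_nondeg_riemm_tens:BBFF}, \eqref{thm_wp_nondeg_riemm_tens:FFBF} follow by checking that in each the surviving Koszul factors are of a type forced to zero by \ref{thm_wp_deg_koszul}\eqref{thm_wp_deg_koszul:BBF} or by parity of $B$/$F$ indices in the contraction.

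The genuinely delicate computation is the mixed case \eqref{thm_wp_nondeg_riemm_tens:BFFB}, $R(X,V,W,T)=-fH^f(X,T)\metric{V,W}_F$, because it is the only component where the Hessian appears and where one must reconcile the Koszul-form bookkeeping with Definition \ref{def_hessian} of $H^f$. Here $R(X,V,W,T)=X\,\kosz(V,W,T)-V\,\kosz(X,W,T)-\kosz([X,V],W,T)+\kosz(X,W,\cocontr)\kosz(V,T,\cocontr)-\kosz(V,W,\cocontr)\kosz(X,T,\cocontr)$; the bracket term drops, and I expect the derivative terms plus the $\cocontr_B$ pieces of the quadratic terms to combine into $-X\bigl(f\,W(f)\bigr)\metric{V,W}_F$ plus correction terms that, after invoking $\de f\in\srformsk 1 B$ and the definition $H^f(X,T)=(\der_X\de f)(T)$, collapse to $-fH^f(X,T)\metric{V,W}_F$.

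The main obstacle is precisely recognizing the Hessian inside this recombination: the covariant derivative $\der_X\de f$ is defined (Definition \ref{def_cov_der_covect}) by $(\der_X\de f)(T)=X(\de f(T))-\annihprod{\lderb XT,\de f}$, and matching the output of the Koszul computation to this expression requires expanding $\lderb XT=\kosz(X,T,\cocontr)$ and identifying the contraction $\annihprod{\cdot,\de f}$ with the $\cocontr_B$-contraction against $\de f$. The sign conventions and the distribution of the warping factor $f$ across the two derivative terms versus the quadratic term are where errors are most likely, so I would handle this case last and verify it against the non-degenerate formula in Proposition \ref{thm_wp_nondeg_riemm} item (4), which must specialize correctly. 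Throughout, the only structural input beyond routine substitution is that $\de f\in\srformsk 1 B$ guarantees all contractions are smooth (so the expressions are genuine scalar fields), which is exactly the hypothesis ensuring $H^f$ exists as a smooth tensor.
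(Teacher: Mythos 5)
Your overall strategy is exactly the paper's: expand each component with the Koszul formula of Proposition \ref{thm_riemann_curvature_tensor_koszul_formula}, evaluate the individual Koszul factors with Propositions \ref{thm_wp_deg_fundam} and \ref{thm_wp_deg_koszul}, split the contraction $\cocontr$ into its $B$- and $F$-blocks, and in case \eqref{thm_wp_nondeg_riemm_tens:BFFB} recognize the Hessian through Definitions \ref{def_cov_der_covect} and \ref{def_hessian}; your treatment of cases \eqref{thm_wp_nondeg_riemm_tens:BBBB}, \eqref{thm_wp_nondeg_riemm_tens:BBBF} and \eqref{thm_wp_nondeg_riemm_tens:BFFB} matches the paper's computations (modulo a slip in \eqref{thm_wp_nondeg_riemm_tens:BFFB}: the leading term is $-X\left(fT(f)\right)\metric{V,W}_F$, not $-X\left(fW(f)\right)\metric{V,W}_F$, since $W$ is vertical and $W(f)=0$). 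The genuine gap is your claim that the vanishing cases all ``follow by checking that the surviving Koszul factors are of a type forced to zero or by parity of $B$/$F$ indices''. That is true only for case \eqref{thm_wp_nondeg_riemm_tens:BBBF}. In case \eqref{thm_wp_nondeg_riemm_tens:BBFF} the derivative terms do not vanish, since $\kosz(Y,W,Q)=f\metric{W,Q}_F\,Y(f)$ by Proposition \ref{thm_wp_deg_koszul}\eqref{thm_wp_deg_koszul:BFF}; they cancel only jointly with the bracket term, via $X\left(fY(f)\right)-Y\left(fX(f)\right)=f[X,Y](f)$, and the two quadratic terms likewise cancel against each other rather than individually. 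Case \eqref{thm_wp_nondeg_riemm_tens:FFBF} is worse: none of the five summands vanishes, and after factoring out $fZ(f)$ one is left with
\begin{equation*}
U\metric{V,Q}_F - V\metric{U,Q}_F - \metric{[U,V],Q}_F + \kosz_F(V,Q,U) - \kosz_F(U,Q,V),
\end{equation*}
whose vanishing requires the metric and symmetry identities of the Koszul form on $F$ (Theorem \ref{thm_Koszul_form_props}, items \eqref{thm_Koszul_form_props_commutYZ} and \eqref{thm_Koszul_form_props_commutXY}). As written, your plan would stall there expecting zeros that are not present; this is precisely the case on which the paper spends the most effort among the vanishing ones.

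A second point you must reconcile is the factor of $f^2$ in case \eqref{thm_wp_nondeg_riemm_tens:FFFF}. You assert the $F$-part reassembles into $f^2R_F(U,V,W,Q)$, whereas the identity being proved reads $R_F(U,V,W,Q)$ with no factor. Your bookkeeping is the honest one if $R_F$ denotes the pullback $\pi_F^*$ of the $(0,4)$ curvature of $(F,g_F)$: the Koszul factors contribute $f^2\cdot f^2$ and the contraction of the warped metric on the vertical block contributes $f^{-2}$ (the point the paper flags as ``the covariant contraction on $F$ cancels the coefficient $f^2$''), leaving an overall $f^2$. The paper's statement and proof implicitly read $R_F$ as the curvature of the fibers with their induced metrics $f^2g_F$ (equivalently, the lifted $(1,3)$ curvature with the index lowered by the warped metric), which absorbs exactly that factor. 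You need to either carry the $f^2$ and interpret $R_F$ as the pullback, or adopt the fiber-curvature convention; ending a computation with $f^2R_F$ while the statement says $R_F$, without comment, does not establish the stated identity.
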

\begin{proof}
In order to prove these identities, we will use the Koszul formula for the Riemann curvature from equation \eqref{eq_riemann_curvature_tensor_koszul_formula}. We will denote the covariant contraction with $\cocontr$ on $B \times_f F$, and with $\stackrel B{\cocontr}$ and $\stackrel F{\cocontr}$ on $B$, respectively $F$.

\begin{equation*}
\begin{array}{llll}
	\eqref{thm_wp_nondeg_riemm_tens:BBBB}&
	R(X,Y,Z,T)&=& X \kosz(Y,Z,T) - Y \kosz(X,Z,T) - \kosz([X,Y],Z,T)\\
	&&& + \kosz(X,Z,\cocontr)\kosz(Y,T,\cocontr) - \kosz(Y,Z,\cocontr)\kosz(X,T,\cocontr) \\
	&&=& X \kosz(Y,Z,T) - Y \kosz(X,Z,T) - \kosz([X,Y],Z,T)\\
	&&& + \kosz(X,Z,\stackrel B{\cocontr})\kosz(Y,T,\stackrel B{\cocontr}) - \kosz(Y,Z,\stackrel B{\cocontr})\kosz(X,T,\stackrel B{\cocontr}) \\
	&&=& R_B(X,Y,Z,T),
	\end{array}
\end{equation*}
where we applied \eqref{thm_wp_deg_koszul:BBF} from the Proposition \ref{thm_wp_deg_koszul}.
\begin{equation*}
\begin{array}{llll}
	\eqref{thm_wp_nondeg_riemm_tens:BBBF}&
	R(X,Y,Z,Q)&=& X \kosz(Y,Z,Q) - Y \kosz(X,Z,Q) - \kosz([X,Y],Z,Q)\\
	&&& + \kosz(X,Z,\cocontr)\kosz(Y,Q,\cocontr) - \kosz(Y,Z,\cocontr)\kosz(X,Q,\cocontr) \\
	&&=& \kosz(X,Z,\cocontr)\kosz(Y,Q,\cocontr) - \kosz(Y,Z,\cocontr)\kosz(X,Q,\cocontr) \\
	&&=& \kosz(X,Z,\stackrel B{\cocontr})\kosz(Y,Q,\stackrel B{\cocontr}) - \kosz(Y,Z,\stackrel B{\cocontr})\kosz(X,Q,\stackrel B{\cocontr}) \\
	&&=& 0,
	\end{array}
\end{equation*}
by the same property, which also leads to
\begin{equation*}
\begin{array}{llll}
	\eqref{thm_wp_nondeg_riemm_tens:BBFF}&
	R(X,Y,W,Q)&=& X \kosz(Y,W,Q) - Y \kosz(X,W,Q) - \kosz([X,Y],W,Q)\\
	&&& + \kosz(X,W,\cocontr)\kosz(Y,Q,\cocontr) - \kosz(Y,W,\cocontr)\kosz(X,Q,\cocontr) \\
	&&=& \kosz(X,W,\cocontr)\kosz(Y,Q,\cocontr) - \kosz(Y,W,\cocontr)\kosz(X,Q,\cocontr) \\
	&&=& 0.
	\end{array}
\end{equation*}
\begin{equation*}
\begin{array}{llll}
	\eqref{thm_wp_nondeg_riemm_tens:FFBF}&
	R(U,V,Z,Q)&=& U \kosz(V,Z,Q) - V \kosz(U,Z,Q) - \kosz([U,V],Z,Q)\\
	&&& + \kosz(U,Z,\cocontr)\kosz(V,Q,\cocontr) - \kosz(V,Z,\cocontr)\kosz(U,Q,\cocontr) \\
	&&=& U \left(f\metric{V,Q}_FZ(f)\right) - V \left(f\metric{U,Q}_FZ(f)\right) \\
	&&&- f\metric{[U,V],Q}_FZ(f) \\
	&&& + \kosz(U,Z,\stackrel B{\cocontr})\kosz(V,Q,\stackrel B{\cocontr}) - \kosz(V,Z,\stackrel B{\cocontr})\kosz(U,Q,\stackrel B{\cocontr}) \\
	&&& + \kosz(U,Z,\stackrel F{\cocontr})\kosz(V,Q,\stackrel F{\cocontr}) - \kosz(V,Z,\stackrel F{\cocontr})\kosz(U,Q,\stackrel F{\cocontr}) \\
	&&=& f Z(f) \left(U \metric{V,Q}_F - V \metric{U,Q}_F - \metric{[U,V],Q}_F\right) \\
	&&& + \kosz(U,Z,\stackrel F{\cocontr})\kosz(V,Q,\stackrel F{\cocontr})_F - \kosz(V,Z,\stackrel F{\cocontr})\kosz(U,Q,\stackrel F{\cocontr})_F \\
	&&=& f Z(f) \left(U \metric{V,Q}_F - V \metric{U,Q}_F - \metric{[U,V],Q}_F\right) \\
	&&& + f\metric{U,\stackrel F{\cocontr}}_FZ(f) \kosz(V,Q,\stackrel F{\cocontr})_F \\
	&&& -f\metric{V,\stackrel F{\cocontr}}_FZ(f)\kosz(U,Q,\stackrel F{\cocontr})_F \\
	&&=& f Z(f) (U \metric{V,Q}_F - V \metric{U,Q}_F - \metric{[U,V],Q}_F ) \\
	&&& + \kosz(V,Q,U)_F - \kosz(U,Q,V))_F \\
	&&=& 0, \\
	\end{array}
\end{equation*}
where we used \eqref{thm_wp_deg_koszul:BFF} and  \eqref{thm_wp_deg_koszul:FFF} from the Proposition \ref{thm_wp_deg_koszul}, together with the Definition \ref{def_Koszul_form}. We also used the property that the covariant contraction on $F$ cancels the coefficient $f^2$ of $\kosz(U,V,W)_F$.

\begin{equation*}
\begin{array}{llll}
	\eqref{thm_wp_nondeg_riemm_tens:BFFB}&
	R(X,V,W,T)&=& X \kosz(V,W,T) - V \kosz(X,W,T) - \kosz([X,V],W,T)\\
	&&& + \kosz(X,W,\cocontr)\kosz(V,T,\cocontr) - \kosz(V,W,\cocontr)\kosz(X,T,\cocontr) \\
	&&=& - X \left(f T(f) \metric{V,W}_F\right) \\
	&&& - \kosz(V,W,\stackrel B{\cocontr})\kosz(X,T,\stackrel B{\cocontr}) \\
	&&& + \kosz(X,W,\stackrel F{\cocontr})\kosz(V,T,\stackrel F{\cocontr})_F \\
	&&=& - X \left(f T(f) \metric{V,W}_F\right) \\
	&&& + f\metric{V,W}_F\de f(\cocontr) \kosz(X,T,\stackrel B{\cocontr})_B \\
	&&& + X(f) \metric{W,\stackrel F{\cocontr}}_F T(f) \metric{V,\stackrel F{\cocontr}}_F \\
	&&=& - X(f) T(f) \metric{V,W}_F - fX(T(f)) \metric{V,W}_F \\
	&&& + f\metric{V,W}_F \kosz(X,T,\stackrel B{\cocontr})_B\de f(\stackrel B{\cocontr}) \\
	&&& + X(f) T(f) \metric{W,V}_F \\
	&&=& f\metric{V,W}_F \left[\kosz(X,T,\stackrel B{\cocontr})_B\de f(\stackrel B{\cocontr}) - X(T(f))\right] \\
	&&=& f\metric{V,W}_F \left[\kosz(X,T,\stackrel B{\cocontr})_B\de f(\stackrel B{\cocontr}) - X\metric{T,\grad f}_B\right] \\
	&&=& -f H^f(X,T)\metric{V,W}_F, \\
	\end{array}
\end{equation*}
where we applied the definition of the Hessian for {\semireg} {\semiriem} manifolds, for $f$ so that $\de f\in\srformsk 1 B$, and the properties of the Koszul derivative of warped products, as in the Proposition \ref{thm_wp_deg_koszul}.

\begin{equation*}
\begin{array}{llll}
	\eqref{thm_wp_nondeg_riemm_tens:FFFF}&
	R(U,V,W,Q)&=& U \kosz(V,W,Q) - V \kosz(U,W,Q) - \kosz([U,V],W,Q)\\
	&&& + \kosz(U,W,\cocontr)\kosz(V,Q,\cocontr) - \kosz(V,W,\cocontr)\kosz(U,Q,\cocontr) \\
	&&=& R_F(U,V,W,Q)\\
	&&& + \kosz(U,W,\stackrel B{\cocontr})\kosz(V,Q,\stackrel B{\cocontr}) - \kosz(V,W,\stackrel B{\cocontr})\kosz(U,Q,\stackrel B{\cocontr}) \\		
	&&=& R_F(U,V,W,Q)\\
	&&& + f^2 \metric{U,W}_F\de f(\stackrel B{\cocontr})\metric{V,Q}_F\de f(\stackrel B{\cocontr}) \\
	&&& - f^2 \metric{V,W}_F\de f(\stackrel B{\cocontr})\metric{U,Q}_F\de f(\stackrel B{\cocontr}) \\
\end{array}
\end{equation*}

\begin{equation*}
\begin{array}{llll}
	&&=& R_F(U,V,W,Q)\\
	&&& + f^2 \annihprod{\de f,\de f}_B\big(\metric{U,W}_F\metric{V,Q}_F \\
	&&&- \metric{V,W}_F\metric{U,Q}_F\big). \\
\end{array}
\end{equation*}
\end{proof}

\begin{remark}
Despite the fact that the Riemann tensor $R(\_,\_)$ is divergent when the warping function converges to $0$ even for warped products of {\nondeg} metrics (\citep{ONe83}{209--210}), Theorem \ref{thm_wp_nondeg_riemm_tens} shows again that the Riemann curvature tensor $R(\_,\_,\_,\_)$ is smooth.
\end{remark}

\section{Applications to General Relativity}
\label{s_riemann_wp_deg_applications}

In this section we show how the degenerate warped product can be used to construct cosmological models and to model black holes.

The degenerate warped product allows the warping function to become $0$ at some points. Under the hypothesis of the Theorem \ref{thm_riemann_curvature_semi_regular} the Riemann curvature still remains well-defined and smooth. 
As we shown in \cite{Sto11a}, for a smooth Riemann curvature tensor of a four-dimensional {\semireg} manifold we can write a \textit{densitized version of Einstein's equation} which remains smooth, and which reduces to the standard version if the metric is non-degenerate:
\begin{equation}
\label{eq_einstein_idx:densitized}
	G\det g + \Lambda g\det g = \kappa T\det g,
\end{equation}
where $G=\ric- \frac 1 2 sg$, $T$ is the stress-energy tensor, $\kappa:=\dsfrac{8\pi \mc G}{c^4}$, $\mc G$ is Newton's constant and $c$ the speed of light.

The generalization of the warped product we propose here provides a powerful method to resolve singularities in cosmology. If we show that a singularity can be obtained as a {\semireg} warped product of {\semireg} (in particular non-degenerate) manifolds, it follows that the densitized version of the Einstein equation is smooth at that singularity.

\subsection{Cosmological models}
\label{s_riemann_wp_deg_applications:FLRW}

In the following we mention a result showing that the \textit{Friedmann-Lema\^itre-Robertson-Walker} spacetime can be extended beyond the Big Bang singularity \cite{Sto11h,Sto12a}.

If $(\Sigma,g_{\Sigma})$ is a connected three-dimensional Riemannian manifold of constant sectional curvature $k\in\{-1,0,1\}$ (\ie $H^3$, $\R^3$ or $S^3$) and $a\in (t_1,t_2)$, $-\infty\leq t_1 < t_2\leq \infty$, $a\geq 0$, then the warped product $(t_1,t_2) \times_a \Sigma$ is called a \textit{Friedmann-Lema\^itre-Robertson-Walker} spacetime:
\begin{equation}
	g = -\de t\otimes\de t + a^2(t) g_{\Sigma}
\end{equation}

By allowing $a$ to become $0$, we can construct Friedmann-Lema\^itre-Robertson-Walker cosmological models in which the evolution equation can pass through the singularities.

In \cite{Sto11h,Sto12a},  we applied the technique presented here in detail and we proved that the Friedmann-Lema\^itre-Robertson-Walker spacetime, and the densitized version of the Einstein equation, can be extended smoothly at and beyond the Big Bang singularity. These techniques also allowed the construction of very general {\semireg} cosmological models which are inhomogeneous and anisotropic, and satisfy the Weyl curvature hypothesis \cite{Sto12c}.

\subsection{Stationary black holes}
\label{s_riemann_wp_deg_applications:black_holes}

At the black hole singularities, some components of the metric become infinite, so they are apparently different than the singularities due to the degeneracy of a smooth metric. In the case of the event horizon, Eddington \cite{eddington1924comparison} and Finkelstein \cite{finkelstein1958past} were able to remove the singularities by applying a singular coordinate transformation (which obviously implies a change of the atlas). It is known that this can be done only for the event horizon singularities, while the $r=0$ singularities are genuine. However, in \cite{Sto11e,Sto11f,Sto11g} we successfully applied the method of coordinate changes to make the metric smooth and actually analytic at the $r=0$ singularities. 

We will show here the simplest case, that of the {\schw} black hole \cite{Sto11e}. The metric in {\schw} coordinates is
\begin{equation}
\label{eq_schw_schw}
\de s^2 = -\dsfrac{r-2m}{r}\de t^2 + \dsfrac{r}{r-2m}\de r^2 + r^2\de\sigma^2,
\end{equation}
where $\de\sigma^2 = \de\theta^2 + \sin^2\theta \de \phi^2$.

The coordinate transformation that makes the metric analytic and {\semireg} everywhere, including at $r=0$, is
\begin{equation}
\begin{array}{l}
\bigg\{
\begin{array}{ll}
r &= \tau^2 \\
t &= \xi\tau^4 \\
\end{array}
\\
\end{array}
\end{equation}
The metric becomes in the new coordinates:
\begin{equation}
\label{eq_schw_analytic_tau_xi}
\de s^2 = -\dsfrac{4\tau^4}{2m-\tau^2}\de \tau^2 + (2m-\tau^2)\tau^{4}\(4\xi\de\tau + \tau\de\xi\)^2 + \tau^4\de\sigma^2,
\end{equation}

The metric remains, of course, singular at $r=0$, because the Kretschmann scalar $R_{abcd}R^{abcd}$ is singular at $r=0$. 
But the part of the singularity due to the coordinates is removed, and what remains is an analytic degenerate metric, which is {\semireg} \cite{Sto11e}.

One crucial step in moving from the coordinates $(\xi,\tau)$ to $(\xi,\tau, \phi, \rho)$ was the usage of degenerate warped products, present because of the spherical symmetry of the {\schw} solution. The warped product was also used for the {\rn} black hole solutions \cite{Sto11e,Sto11f}, and the {\kn} solutions (which have cylindrical symmetry) \cite{Sto11g}. These simple stationary solutions are shown to be compatible with global hyperbolicity, and therefore with a unitary evolution \cite{Sto12e}.

As a side effect of the degenerate warped product, it turned out that the resulting singularities are accompanied by dimensional reduction effects like those needed in Quantum Gravity \cite{Sto12d}.

\section*{Acknowledgment}

Partially supported by Romanian Government grant PN II Idei 1187.


\end{document}